\documentclass[reqno]{amsart}

\usepackage{graphicx}
\usepackage{amsfonts}
\usepackage{amsmath}
\usepackage{amssymb}
\usepackage{amsthm}
\usepackage{enumerate}
\usepackage{esint}
\usepackage{geometry}
\usepackage{musicography}
\usepackage{mathtools}
\usepackage{hyperref}
\usepackage{listings}

\usepackage[]{hyphenat}

\AtBeginDocument{
  
}

\newtheorem{theorem}{Theorem}

\newtheorem{lemma}[theorem]{Lemma}
\newtheorem{remark}[theorem]{Remark}
\usepackage{esint}
\usepackage{xcolor}

\title{Classification of singularities of planar slowness surfaces}
\author{Antonio Cocan}
\author{Maarten V. de Hoop}
\author{Joonas Ilmavirta}
\author{Pieti Kirkkopelto}
\author{Antti Kykkänen}
\date{\today}

\newcommand{\R}{\mathbb{R}}
\newcommand{\C}{\mathbb{C}}

\newcommand{\abs}[1]{\left\lvert#1\right\rvert}

\DeclareMathOperator{\tr}{tr}

\DeclareMathOperator{\Sym}{Sym}
\newcommand{\der}{\mathrm{d}}
\newcommand{\bundleset}{\mathcal{B}}
\newcommand{\baseset}{\mathcal{H}}
\newcommand{\fiberset}{\mathcal{V}}
\newcommand{\singularset}{S}
\newcommand{\sphere}[1]{\mathbb{S}^{#1}}

\newcommand{\V}{\mathbb{V}}
\newcommand{\bolda}{\mathbf{a}}
\newcommand{\boldc}{\mathbf{c}}

\begin{document}

\begin{abstract}
Slowness surfaces are algebraic varieties arising from propagation of elastic waves. In dimensions $2$, we completely classify the types of singularities slowness surfaces can have. The two types of possible singularities are a transversal self-intersection and a tangential singularity produced by a concentric circle and ellipse that are tangent to each other.

To interpret these results analytically, in the case that the slowness surface has transversal self-intersections, we show that the principal symbol of the elastic wave operator is locally smoothly diagonalizable.
\end{abstract}

\maketitle

\section{Introduction}

\subsection{Algebraic results}

A slowness surface $\Sigma_\bolda\subset\R^n$ is an algebraic variety parametrized by a (density-normalized) stiffness tensor $\bolda$ (a rank $4$ tensor with suitable symmetries\footnote{
The required symmetries for the components of $\bolda$ are $a_{ijkl}=a_{jikl}=a_{klij}$.
This can be encoded as $\bolda$ being an element of the symmetric tensor square of the symmetric tensor square of $\R^n$.
See section~\ref{sec:defs} for details.
}):
Any $\bolda\in\Sym^2_+(\Sym^2(\R^n))$ and $\xi\in\R^n$ define the Christoffel matrix $\Gamma_\bolda(\xi)$ whose $ij$:th entry is $\sum_{k,l}a_{iklj}\xi_k\xi_l$, which in turn defines the slowness polynomial $P_\bolda(\xi)=\det(\Gamma_\bolda(\xi)-I)$, whose vanishing set is the slowness surface $\Sigma_\bolda$ (see e.g.~\cite{dHILV2023} for more details).

In this paper, we classify the different kinds of singularities that slowness surfaces can have in dimension $n=2$.
In~\cite[Theorems 2]{IKK2025} a full characterization was given of stiffness tensors $\bolda$ for which $\Sigma_\bolda$ is singular, that is, the question as to \textit{when} a singularity occurs was answered. We now focus on understanding \textit{what} kinds of singularities are possible and when each one happens.

While algebraic curves in the plane can exhibit many kinds of singular behavior, slowness surfaces can only have two kinds of singularities:
either a transversal crossing, which we call an ordinary node (cf.~\cite{CA2000}) or an ordinary tacnode (cf.~\cite{CA2000}), given by a concentric circle and ellipse being tangent to each other, as we make precise in the following theorem.

\begin{theorem}
\label{classification}
Let $\bolda\in\Sym^2_+(\Sym^2(\R^2))$. The slowness surface $\Sigma_\bolda$ can have exactly two types of singularities: ordinary nodes (transversal self-intersection) or ordinary tacnodes (a concentric circle and an ellipse tangent to each other).
\end{theorem}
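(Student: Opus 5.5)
Every singular point of $\Sigma_\bolda$ must be a point where the two sheets meet, so the plan is to locate those points and then read off the local geometry of $P_\bolda=0$ there.

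\textbf{Setup and location of singular points.} In dimension $2$, $\Gamma_\bolda(\xi)$ is a symmetric $2\times 2$ matrix whose entries are quadratic forms in $\xi$. Write
\[
\Gamma_\bolda(\xi)=\begin{pmatrix} A(\xi) & B(\xi)\\ B(\xi) & C(\xi)\end{pmatrix}.
\]
The eigenvalues are
\[
\lambda_\pm(\xi)=\tfrac12\bigl(A+C\pm\sqrt{D}\bigr),\qquad D=(A-C)^2+4B^2 .
\]
Since $\bolda$ is positive definite, both $\lambda_\pm$ are positive definite and $2$-homogeneous. Hence each set $\{\lambda_\pm=1\}$ is a star-shaped closed curve, and
\[
\Sigma_\bolda=\{\lambda_+=1\}\cup\{\lambda_-=1\}.
\]
Away from $\{D=0\}$, each $\lambda_\pm$ is smooth with $\xi\cdot\nabla\lambda_\pm=2\lambda_\pm\neq0$, so $\Sigma_\bolda$ is smooth there. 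At a point where only one sheet passes through, $P_\bolda=(\lambda_+-1)(\lambda_--1)$ has nonzero gradient. Therefore singular points are exactly the points $\xi_0\neq0$ with $D(\xi_0)=0$ and $\Gamma_\bolda(\xi_0)=I$. By homogeneity these come from directions in which $A-C$ and $B$ vanish simultaneously. I would invoke the characterization of singular $\Sigma_\bolda$ from \cite[Theorem 2]{IKK2025} to organise these directions.

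\textbf{Local model.} Near a singular point $\xi_0$, write $P_\bolda=(\lambda_+-1)(\lambda_--1)$ and Taylor expand in a local coordinate $t$ along the circle direction and a radial coordinate $r$. Since $A+C$ is smooth with nonzero radial derivative, the sheets are $\lambda_\pm-1=\tfrac12\bigl(A+C-2\pm\sqrt{D}\bigr)$. Here $D=f^2+g^2$ with $f=A-C$ and $2B=g$, both binary quadratic forms vanishing at $\xi_0$. The two sheets are thus graphs $r=h(t)\pm c\sqrt{f^2+g^2}+\dots$. Now compare the restrictions of $f$ and $g$ to the unit circle:
\begin{itemize}
\item If they vanish to first order at $\xi_0$ and are not both proportional with a zero of order $\ge2$, then $\sqrt{f^2+g^2}\approx|t|\cdot\sqrt{f'(\xi_0)^2+g'(\xi_0)^2}$. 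This gives $P_\bolda\sim(\text{linear})^2-(\text{nonzero})\,t^2$, a nondegenerate quadratic with distinct real tangents, i.e.\ an ordinary node.
\item The sheets become tangent only if $f$ and $g$ both vanish to second order in the angular variable. For quadratic forms on $\R^2$ this means $f$ and $g$ are both multiples of $(\xi\cdot\xi_0^\perp)^2$, i.e.\ $A-C=\alpha\,\ell^2$ and $B=\beta\,\ell^2$ with $\ell$ linear.
\end{itemize}

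\textbf{Degenerate case: identifying the tacnode.} In the tangential case, $D=(\alpha^2+4\beta^2)\ell^4$, so $\sqrt D$ is the polynomial $c\,\ell^2$. Then $\lambda_\pm=\tfrac12(A+C\pm c\ell^2)$ are honest quadratic forms and $P_\bolda$ factors globally into two conics. Since $\Gamma_\bolda(\xi_0)=I$ along the direction $\xi_0$, with $\ell(\xi_0)=0$, one factor should be a multiple of $|\xi|^2$ and the other an ellipse, after checking that the positive-definiteness and symmetry constraints on $\bolda$ force $A+C-c\ell^2$ to be a multiple of $|\xi|^2$. If instead $\alpha=\beta=0$, then $\Gamma_\bolda$ is scalar, the two sheets coincide, and the curve is a double component. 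I would need to determine whether this is excluded or counted separately. The contact of a circle and a concentric ellipse at $\pm\xi_0$ is of order exactly two unless they coincide, giving an ordinary tacnode ($A_3$ singularity). I would verify this by writing $P_\bolda$ locally as $(r-h_1(t))(r-h_2(t))$ with $h_1-h_2\sim t^2$.

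\textbf{Main obstacle.} The delicate step is the intermediate order-of-vanishing analysis: ruling out higher-order contacts and cusps. This requires using the symmetries $a_{ijkl}=a_{klij}$ to show that $A+C-2$, $A-C$ and $B$ cannot conspire to produce anything beyond the two cases above. I would attempt it by parametrising $\bolda$ by its six independent components and computing the Hessian of $P_\bolda$ at $\xi_0$ explicitly.
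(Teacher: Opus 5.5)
Your proposal is correct, up to checks that do close. It has the same skeleton as the paper: at a singular point, a nondegenerate Hessian gives an ordinary node by the Morse lemma, and a degenerate Hessian forces the circle--ellipse normal form. The difference is that you prove the key step by a more elementary mechanism.

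Your $D$ restricted to the unit circle is exactly the paper's $f_\bolda(\theta)$.
\begin{itemize}
\item The paper handles the node case through the identity $\det D^2P_\bolda(\xi_0)=-2f''_\bolda(0)$, which it states without derivation.
\item It handles the degenerate case through Theorem~\ref{thm:classification-v2}, that is, Lemma~\ref{lma:f''=0}. There \textsc{Maxima} solves $f_\bolda(0)=f'_\bolda(0)=f''_\bolda(0)=0$, returning nine families, most of them complex, which are then pruned by reality and positive definiteness.
\end{itemize}

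You instead write $4P_\bolda=(A+C-2)^2-(A-C)^2-(2B)^2$ and use Euler's relation. At $\xi_0$ the gradients of $A-C$ and $B$ are purely angular, while $\nabla(A+C)$ has nonzero radial part. Hence the Hessian determinant is a negative multiple of $(\partial_t(A-C))^2+4(\partial_tB)^2$. This proves the paper's identity, and it shows that over $\R$ degeneracy is the \emph{linear} condition $A-C,\,B\in\R\ell^2$.

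What each route buys:
\begin{itemize}
\item Yours explains why the complex families in the paper's list disappear over $\R$: they are exactly where a sum of two squares vanishes without its summands vanishing. It also makes the whole proof hand-checkable.
\item The paper's computation records the full complex solution set, which is in the spirit of Theorem~\ref{thm:main2}.
\end{itemize}

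The steps you left open all close quickly.
\begin{itemize}
\item \emph{The circle.} Take $\xi_0\parallel e_1$. The condition $A-C,B\in\R p_2^2$ reads $a_{1111}=a_{1212}$, $a_{1112}=a_{2212}=0$, $a_{1122}=-a_{1212}$. So $B\equiv0$ and $\Gamma_\bolda(\xi)=\mathrm{diag}\bigl(r|\xi|^2,\,r\xi_1^2+t\xi_2^2\bigr)$ with $r=a_{1212}$, $t=a_{2222}$. The circle comes from $a_{1212}$ appearing in both diagonal entries of $\Gamma_\bolda$, not from positivity.
\item \emph{Positivity.} In the basis $(A_{11},A_{22},2A_{12})$, the $3\times3$ matrix of $\bolda$ now has leading minors $r$, $r(t-r)$ and $r^2(t-r)$. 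Positive definiteness therefore forces $t>r$. This excludes your scalar case $\alpha=\beta=0$ (which is $t=r$). It also shows the circle is the $\lambda_-$ sheet, so your $A+C-c\ell^2$ is the right factor, and it gives contact of order exactly two.
\item \emph{Your ``main obstacle'' does not exist.} The dichotomy $(\partial_t(A-C),\partial_tB)(\xi_0)\neq0$ versus $=0$ is already exhaustive. A binary quadratic form with a critical zero at $\xi_0$ is a multiple of $\ell^2$. So no cusp or higher-contact case can arise, and no further Hessian computation is needed.
\end{itemize}
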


In applications the stiffness tensor is real, and one is primarily interested in the real points of the slowness surface. From the algebraic point of view, however, it is also natural to consider complex stiffness tensors and complex points. Our first main result gives an algebraic characterization of the tangential, or ordinary tacnode, singularities.

\begin{theorem}
\label{thm:classification-v2}
Let $\bolda\in\Sym^2_+(\Sym^2(\R^2))$. The following are equivalent.
\begin{enumerate}
    \item $\Sigma_\bolda$ has an ordinary tacnode singularity.
    \item $\Sigma_\bolda$ is a union of a circle and an ellipse that are tangent at two opposite points.
    \item The following six polynomials vanish:
\begin{equation}
\label{eq:ordinary tacnode_sing_polys-full}
\begin{split}
Q_1(\bolda) &= 
a_{1122}a_{2222}
- a_{1122}a_{1212}
+ 2a_{1112}a_{2212}
+ a_{2222}a_{1212}
- 2a_{2212}^2
- a_{1212}^2,
\\
Q_2(\bolda) &= a_{1122}^2
+ 2a_{1122}a_{1212}
- 4a_{1112}a_{2212}
+ a_{1212}^2,
\\
Q_3(\bolda) &= a_{1111}a_{2212}
+ a_{1112}a_{2222}
- a_{1112}a_{1212}
- a_{2212}a_{1212},
\\
Q_4(\bolda) &= 2a_{1111}a_{2212}
- a_{1122}a_{1112}
+ a_{1122}a_{2212}
- a_{1112}a_{1212}
- a_{2212}a_{1212},
\\
Q_5(\bolda) &= a_{1111}a_{2222}
- a_{1111}a_{1212}
+ a_{1112}^2
- 2a_{1112}a_{2212}
- a_{2222}a_{1212}
+ a_{2212}^2
+ a_{1212}^2,
\\
Q_6(\bolda) &= a_{1111}a_{1122}
+ 2a_{1111}a_{2222}
- a_{1111}a_{1212}
- a_{1122}a_{1212}
- 2a_{1112}a_{2212} - 2a_{2222}a_{1212}
+ 2a_{2212}^2
+ a_{1212}^2.
\end{split}
\end{equation}
\end{enumerate}
The dimension of the set of such stiffness tensors $\bolda$ is $3$.
\end{theorem}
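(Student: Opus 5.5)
We work with the explicit slowness polynomial in dimension two. Writing $\Gamma_\bolda(\xi)$ as a $2\times 2$ symmetric matrix with quadratic-form entries $A(\xi)=a_{1111}\xi_1^2+2a_{1112}\xi_1\xi_2+a_{1212}\xi_2^2$, $B(\xi)=a_{1112}\xi_1^2+(a_{1122}+a_{1212})\xi_1\xi_2+a_{2212}\xi_2^2$, $C(\xi)=a_{1212}\xi_1^2+2a_{2212}\xi_1\xi_2+a_{2222}\xi_2^2$, we have
\[
P_\bolda(\xi)=(A-1)(C-1)-B^2 = \Delta(\xi)-(A+C)(\xi)+1,
\]
where $\Delta=AC-B^2$ is a quartic form. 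In polar coordinates $\xi=r\theta$ this is a quadratic in $s=r^2$, namely $\Delta(\theta)s^2-(A+C)(\theta)s+1$. Its roots $s_\pm(\theta)$ are the reciprocals of the eigenvalues $\lambda_\pm(\theta)$ of $\Gamma_\bolda(\theta)$. Positivity of $\bolda$ makes $\Gamma_\bolda(\theta)$ positive definite, so the real curve consists of two star-shaped closed curves $r=\lambda_\pm(\theta)^{-1/2}$. A singular point must satisfy $\nabla P_\bolda=0$. Along a ray, the radial derivative vanishes exactly when $s_+=s_-$, that is, exactly when $\lambda_+(\theta)=\lambda_-(\theta)$. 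So singular points occur precisely in directions where $\Gamma_\bolda(\theta)$ is a multiple of the identity, i.e.\ $A(\theta)=C(\theta)$ and $B(\theta)=0$.

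\textbf{Step 1: $(2)\Leftrightarrow(3)$.} Suppose $\Sigma_\bolda$ is a circle $\{|\xi|^2=c\}$ together with an ellipse $\{q(\xi)=1\}$. Then $P_\bolda(\xi)=(|\xi|^2/c-1)(q(\xi)-1)$. Equivalently, $\Gamma_\bolda(\xi)$ has the eigenvalue $|\xi|^2/c$ for every $\xi$, so $\det(\Gamma_\bolda(\xi)-|\xi|^2/c\, I)\equiv 0$ as a quartic form. Comparing coefficients, this identity is a polynomial system in $(\bolda,c)$. We then eliminate $c$, for instance with a Gröbner basis or resultant computation, and check that the elimination ideal is generated by $Q_1,\dots,Q_6$, possibly after saturating away degenerate components that are excluded by positivity. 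The tangency is automatic: the directions where the circle meets the ellipse are the directions where $q(\theta)=|\theta|^2/c$, and these come in antipodal pairs. One must still check that this happens at two (antipodal) points rather than on a full circle or not at all; the circle and ellipse must meet, because otherwise there is no singularity and the condition becomes vacuous. Here I expect the intended statement to be that $\Sigma_\bolda$ is reducible in this way with the two components meeting. Meeting is forced by the trace condition $A+C=\mathrm{tr}\,\Gamma$ together with positivity, and I would verify this by diagonalizing $q$.

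\textbf{Step 2: $(1)\Leftrightarrow(2)$.} $(2)\Rightarrow(1)$ is a direct local check. At a tangency point, a circle and a non-circular concentric ellipse meet with contact order exactly $2$, which gives an $A_3$ singularity, i.e.\ an ordinary tacnode. For $(1)\Rightarrow(2)$, at a singular direction $\theta_0$ we have $\lambda_+=\lambda_-$. We analyze the discriminant $D(\theta)=(A-C)^2+4B^2$, which is a sum of squares of two binary quadratic forms. If $D$ vanishes to order exactly $2$ at $\theta_0$ (generic case, where $A-C$ and $B$ have independent linear parts), then the two branches $\lambda_\pm$ cross transversally and we get a node. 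If the crossing is not transversal, then $A-C$ and $B$ are proportional to a common linear factor at $\theta_0$. Since these are quadratic forms in two variables, we use this to show that $D$ must be a perfect square $L(\theta)^2\ell(\theta)^2$ times a positive constant. This makes $\sqrt{D}$ polynomial, so $\lambda_\pm$ are quadratic forms and $P_\bolda$ factors into two conics. The trace and determinant structure then force one factor to be a multiple of $|\xi|^2$. \textbf{This is the main obstacle}: turning a non-transversal crossing into a global factorization, and excluding higher-order tangencies such as a double circle. I would attack it through the local expansion of $D$ and the fact that a sum of two real squares vanishing to order $\ge 4$ forces both $A-C$ and $B$ to vanish to order $2$, i.e.\ to share the factor $\ell^2$.

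\textbf{Step 3: dimension.} The space $\Sym^2_+(\Sym^2(\R^2))$ has dimension $6$. In case (2) the tensor is determined by the circle radius ($1$ parameter) and the ellipse ($3$ parameters). We then subtract the constraint that the pair actually yields a stiffness tensor: the map from $\bolda$ to $\Gamma_\bolda$ is injective, and $\Gamma$ must be of the form $|\xi|^2 c^{-1}\Pi+q(\xi)(I-\Pi)$ with $\Pi$ the eigenprojection. This cuts the count down to a $3$-parameter family. To confirm, I would compute the rank of the Jacobian of $(Q_1,\dots,Q_6)$ at a sample point, for example $a$ isotropic-like with a tangency, and expect rank $3$.
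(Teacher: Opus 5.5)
The gap is in Step 1. The identity $\det(\Gamma_\bolda(\xi)-c^{-1}|\xi|^2I)\equiv0$ only says that $\Sigma_\bolda$ contains the circle $|\xi|^2=c$. You then claim that the circle and the ellipse must meet and that meeting means tangency. Neither is justified, and the first is false.

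Take $\eta^T\Gamma_\bolda(\xi)\eta=c^{-1}|\xi|^2|\eta|^2+(\eta^TL\xi)^2$ with $c>0$ and $L$ a symmetric $2\times2$ matrix with $\tr L\neq0$. This is a positive definite stiffness tensor: its quadratic form on symmetric $S$ is $2c^{-1}|S-\tfrac12(\tr S)I|^2+(\tr(LS))^2$. Its slowness surface is the circle together with the ellipse $c^{-1}|\xi|^2+|L\xi|^2=1$. This ellipse lies inside the circle and meets it only if $\det L=0$.

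On this $4$-parameter family one computes $Q_2=(\det L)^2$. For example, $a_{1111}=2$, $a_{1122}=a_{1212}=1$, $a_{2222}=5$, $a_{1112}=a_{2212}=0$ gives $P_\bolda=(x^2+y^2-1)(2x^2+5y^2-1)$, two disjoint curves, and $Q_2=4$. Isotropic media, with two concentric circles, are another instance.

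So eliminating $c$ alone produces a variety of dimension at least $4$ that contains positive definite tensors with $Q_2\neq0$. The elimination ideal therefore cannot be $\langle Q_1,\dots,Q_6\rangle$, and no saturation by positivity removes these tensors. You must add the tangency equation $\det(q-c^{-1}I)=0$ before eliminating; this is the paper's generator $(a-r)(b-r)-c^2$.

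Your antipodal-pairs remark also does not separate tangency from a four-point transversal crossing of a concentric circle and ellipse. What rules out the crossing for stiffness tensors is the symmetry $\eta^T\Gamma_\bolda(\xi)\eta=\xi^T\Gamma_\bolda(\eta)\xi$.

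The same misconception breaks the count in Step 3. Every circle with a distinct concentric ellipse inside it is realized by the family above: take $L$ to be the positive square root of the matrix of $q-c^{-1}|\cdot|^2$. So realizability is an open condition and does not lower the count $1+3=4$. The drop to $3$ comes precisely from the tangency condition $\det L=0$.

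Step 2, by contrast, is sound, and the step you flag as the main obstacle is short. Rotate $\theta_0$ to $e_1$. Non-transversality then means that $A-C$ and $B$ are both multiples of $\xi_2^2$. Using the explicit entries of $\Gamma_\bolda$, this reads $a_{1112}=a_{2212}=0$ and $a_{1111}=a_{1212}=-a_{1122}$, which is exactly \eqref{eq:ellipse-tensor}. Positivity then gives $a_{2222}>a_{1111}$, which excludes a double circle, and $P_\bolda$ factors as in \eqref{eq:ellipse-polynomial}.

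It is these linear relations among the entries of $\Gamma_\bolda$, not the trace and determinant, that produce the circle. This gives a computer-free proof of Lemma~\ref{lma:f''=0} in place of the paper's \textsc{Maxima} solve. It also gives the dimension $3$ directly: the parameters $r<t$ plus the rotation angle. Your local $A_3$ check for $(2)\Rightarrow(1)$ is also more precise than the paper's argument. Once you add the tangency equation in Step 1, the proposal goes through along the lines of the paper's elimination.
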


It turns out that real singularities are rarer than complex ones in a sense we quantify in the following theorem.

\begin{theorem}
\label{thm:main2}
Let $A\cong\C^6$ be the space of all complex stiffness tensors.
Let $S\subset A$ be the subset whose slowness surfaces have a singular point, and $U\subset S$ the subset where there is an ordinary tacnode singular point.
Then $S$ and $U$ are Zariski-closed and their dimensions are 5 and 4, respectively.
\end{theorem}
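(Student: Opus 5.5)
Both $S$ and $U$ will be realized as projections of incidence varieties, and their dimensions computed from the fibres. Write $\bolda=(a_{1111},a_{2222},a_{1212},a_{1122},a_{1112},a_{2212})\in A\cong\C^6$. The slowness polynomial $P_\bolda(\xi)=\det(\Gamma_\bolda(\xi)-I)$ is a quartic in $\xi\in\C^2$ whose coefficients are polynomial in $\bolda$. Consider the incidence set
$$X=\{(\bolda,\xi)\in A\times\C^2 : P_\bolda(\xi)=0,\ \nabla_\xi P_\bolda(\xi)=0\}.$$
Then $S=\pi_1(X)$. The main subtlety for closedness is that $\C^2$ is not compact. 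I will therefore homogenize: $\tilde P_\bolda(\xi,t)=\det(\Gamma_\bolda(\xi)-t^2I)$, and work in $A\times\mathbb{P}^2$. The projection $A\times\mathbb{P}^2\to A$ is proper, so the image of the closed incidence variety is Zariski closed. Singular points at infinity ($t=0$) are the singular points of the binary quartic $\det\Gamma_\bolda(\xi)$. These are excluded off a closed set by construction, or included consistently, depending on the convention of \cite{IKK2025}. Alternatively I will invoke the explicit characterization of singular $\bolda$ from \cite[Theorem 2]{IKK2025}. If that characterization is given by polynomial equations (e.g.\ a discriminant-type polynomial), closedness is immediate.

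For $\dim S=5$, I will show that $S$ is a hypersurface. By the Jacobian criterion, $\xi$ is singular exactly when $\Gamma_\bolda(\xi)$ has a repeated eigenvalue equal to $1$. For a $2\times 2$ symmetric matrix, a repeated eigenvalue over $\C$ means the discriminant $(\Gamma_{11}-\Gamma_{22})^2+4\Gamma_{12}^2$ vanishes. Requiring $P_\bolda(\xi)=0$ and the gradient to vanish gives 3 equations in the 2 unknowns $\xi$, so the expected codimension of $S$ is $1$. To confirm $\dim S=5$ it suffices to do two things:
\begin{enumerate}
\item exhibit a single $\bolda\in S$ where the fibre of $X\to A$ is finite and $X$ has local dimension $5$ near it (a dimension count on $X$);
\item note that $S\neq A$, since a generic $\bolda$ (e.g.\ an isotropic one with distinct wave speeds) has a smooth slowness curve.
\end{enumerate}
Since $S$ is then a proper closed subset containing a $5$-dimensional piece, $\dim S=5$. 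Concretely, parametrize $X$ by $\xi$ and by the five remaining free parameters after imposing the three conditions, which are linear in $\bolda$ for fixed $\xi$. For fixed $\xi\neq0$ these three linear conditions on $\bolda$ are independent. Hence $X\to\C^2$ has fibres of dimension $3$, so $\dim X=5$. Since generic fibres of $X\to S$ are finite, $\dim S=5$.

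For $U$, I will use Theorem~\ref{thm:classification-v2}, whose algebraic argument I expect to go through over $\C$. It shows that ordinary tacnodes occur exactly on the common zero set of $Q_1,\dots,Q_6$ from \eqref{eq:ordinary tacnode_sing_polys-full}. This set is Zariski closed by definition.

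For the dimension of $U$, I will use the geometric description: $P_\bolda$ factors as (circle)$\cdot$(concentric conic) tangent at two opposite points. Over $\C$, count the parameters. The circle has radius parameter $r$. The conic is determined by a quadratic form $q$ with $q-r^{2}|\xi|^2$ degenerate, i.e.\ rank one: this is one condition on the three parameters of $q$ beyond $r$. Together this gives a $3$-parameter family of slowness curves, matching the real count in Theorem~\ref{thm:classification-v2}. The map from stiffness tensors to $P_\bolda$ has fibres of dimension $1$ generically, and over $\C$ the fibres above this locus are expected to gain one dimension. I would verify this by computing the dimension of the ideal $(Q_1,\dots,Q_6)$ directly, e.g.\ by a Gröbner basis or by checking the rank of the Jacobian of $(Q_i)$ at a generic point of the zero set. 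The expected rank is $2$, giving $\dim U=4$.

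The main obstacle is this last step. Over $\R$ the zero set has dimension $3$, while over $\C$ it has dimension $4$, so the complex locus must contain components with no dense set of real points. I would first try exhibiting an explicit $4$-parameter complex family, likely using isotropic directions $\xi_1\pm i\xi_2$ where the Euclidean form degenerates, and then bound $\dim U\le 4$ via the rank-$2$ Jacobian computation.
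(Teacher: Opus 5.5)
\textbf{The gap is in your treatment of $U$; this route cannot give $4$.} You identify $U$ over $\C$ with $\V(Q_1,\dots,Q_6)$ and expect the Jacobian of $(Q_i)$ to have generic rank $2$. It has rank $3$. Set $u=a_{1111}-a_{1212}$, $v=a_{2222}-a_{1212}$, $p=a_{1112}$, $q=a_{2212}$, $s=a_{1122}+a_{1212}$. In these coordinates the six polynomials become
\begin{gather*}
Q_1=vs+2q(p-q),\quad Q_2=s^2-4pq,\quad Q_3=qu+pv,\\
Q_4=2qu+(q-p)s,\quad Q_5=uv+(p-q)^2,\quad Q_6=su+2uv-2q(p-q),
\end{gather*}
and none of them involves $a_{1212}$. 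Write $p=\alpha^2$, $q=\beta^2$, $s=2\alpha\beta$.
\begin{itemize}
\item If $\alpha\beta\neq0$, then $Q_1=Q_4=0$ force $u=\alpha(\alpha^2-\beta^2)/\beta$ and $v=-\beta(\alpha^2-\beta^2)/\alpha$, after which every $Q_i$ vanishes.
\item If $\alpha\beta=0$, the equations force $p=q=s=0$ and $uv=0$.
\end{itemize}
So $\V(Q_1,\dots,Q_6)\subset\C^6$ is $3$-dimensional, consistent with the dimension stated in Theorem~\ref{thm:classification-v2}. There is no extra complex component. At \eqref{eq:ellipse-tensor} with $t\neq r$ the Jacobian already has rank $3$. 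Carried out, your plan gives $\dim U=3$.

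The underlying error is expecting Theorem~\ref{thm:classification-v2} to survive over $\C$. Its proof uses $f_\bolda\ge0$, which comes from the spectral theorem (Lemma~\ref{lma:f>0}). It also discards the genuinely complex solution families in the proof of Lemma~\ref{lma:f''=0} by realness and positivity. Over $\C$ the non-nodal singular locus is therefore larger than the circle--ellipse locus. Separately, $\bolda\mapsto P_\bolda$ is generically finite, not with $1$-dimensional fibres: over \eqref{eq:ellipse-polynomial} with $t\neq r$ the complex fibre is a single point.

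The paper does not use the $Q_i$ here. It takes the tensors having a singular point where the holomorphic Morse lemma fails. Concretely, it eliminates $x,y$ from $J=\langle P_\bolda,\partial_1P_\bolda,\partial_2P_\bolda,\det\operatorname{Hess}P_\bolda\rangle$, which adds one equation to the incidence variety for $S$, and obtains dimension $4$.

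\textbf{The $S$ part is repairable but argued incorrectly.} For fixed $\xi$ the three conditions are quadratic in $\bolda$, not linear; already the $x^4$-coefficient is $a_{1111}a_{1212}-a_{1112}^2$. Also, over $\C$ a repeated eigenvalue $1$ is only necessary for a singular point, not sufficient. A correct count runs as follows.
\begin{itemize}
\item For $\xi\neq0$, the conditions $P_\bolda(\xi)=0=\xi\cdot\nabla P_\bolda(\xi)$ force $N=\Gamma_\bolda(\xi)-I$ to be nilpotent.
\item This cuts out a $4$-dimensional set of $\bolda$: a $1$-dimensional cone in $\Sym^2(\C^2)$, pulled back by the surjective linear map $\bolda\mapsto\Gamma_\bolda(\xi)$.
\item Then $\nabla P_\bolda(\xi)=-\tr(N\,\nabla\Gamma_\bolda(\xi))=0$ is only one further condition, by Euler's identity.
\end{itemize}
This gives $3$-dimensional fibres and $\dim X=5$. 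You still need an explicit $\bolda\in S$ with finitely many singular points to conclude $\dim S=5$.

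For closedness, neither of your arguments works. The $\mathbb{P}^2$ argument proves closedness of a different set, since it also contains tensors whose only singularities lie at infinity. The criterion of \cite{IKK2025} ($R=0$, $D>0$) is real semialgebraic, so it does not give Zariski-closedness of $S$ in $\C^6$. The paper's own argument likewise only identifies $\overline S$ with $\V(eI)$.
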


\subsection{Analytic results}

An elastic material is described by a density function $\rho>0$ and a stiffness tensor~$\boldc\in\Sym^2_+(\Sym^2(\R^n))$. These material parameters may vary with position, and small perturbations $u(t,x)\in\R^n$ from equilibrium satisfy the linear elastic wave equation
\begin{equation}
\rho(x)\partial_t^2u_i(t,x)
-\sum_{j,k,l}\partial_{x_j}[c_{ijkl}(x)\partial_{x_k}u_l(t,x)]
=
0.
\end{equation}
The full solution theory of this system is subtle, but the propagation of singularities is governed by the principal symbol of the spatial elastic operator, namely the Christoffel matrix.

When the relevant characteristic roots are smooth, or more generally when the system is of real principal type in the sense of~\cite{Dencker1982}, singularities propagate along the Hamiltonian flows associated with the eigenvalues of the principal symbol. The union of the corresponding level sets is precisely the slowness surface for the density-normalized stiffness tensor $\bolda=\rho^{-1}\boldc$. Thus, at the level of geometric optics, slowness surfaces encode the geometry along which elastic waves propagate.

At singular points of the slowness surface this description requires additional care. The ordinary node singularities isolated above are still microlocally tractable. If $\bolda(x)$ varies smoothly with $x\in\Omega\subset\R^n$, then each cotangent fiber $T^*_x\Omega$ carries a slowness surface $\Sigma_{\bolda(x)}$. The next theorem gives the local smooth diagonalization result that underlies the microlocal analysis near ordinary node singularities.

\begin{theorem}
\label{thm:ordinary node->diagonalizable}
Let $\Omega\subset\R^n$ be open. Suppose that $\boldc\colon\Omega\to\Sym^2_+(\Sym^2(\R^n))$ and $\rho\colon\Omega\to(0,\infty)$ are $C^\infty$, and set $\bolda=\rho^{-1}\boldc$. Let $x_0\in\Omega$ and $\xi_0\in T^*_{x_0}\Omega\setminus\{0\}$. Assume that $(x_0,\xi_0)$ has a neighborhood $\bundleset\subset T^*\Omega$ in which all singularities of the fiberwise slowness surfaces are ordinary node. Then, after possibly shrinking to a neighborhood $\bundleset'\subset\bundleset$ of $(x_0,\xi_0)$, there is a smooth $0$-homogeneous map $U\colon\bundleset'\to O(n)$ such that
\begin{equation}
U(x,\xi)\Gamma_{\bolda(x)}(\xi)U(x,\xi)^T
\end{equation}
is diagonal for every $(x,\xi)\in\bundleset'$.
\end{theorem}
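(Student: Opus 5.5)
The plan is to reduce the general $n\times n$ problem to a smooth $2\times 2$ block by spectral splitting. The ordinary-node hypothesis is then used to produce a smooth half-angle rotation inside that block.

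\textbf{Reduction to the unit sphere.} Since $\Gamma_{\bolda(x)}(\xi)$ is $2$-homogeneous in $\xi$ and positive definite, $\Gamma_{\bolda(x)}(\xi)$ has eigenvalue $1$ exactly when $\Gamma_{\bolda(x)}(\xi/\abs{\xi})$ has eigenvalue $\abs{\xi}^{-2}$. So every nonzero covector lies on a ray that meets $\Sigma_{\bolda(x)}$ at the points where the corresponding eigenvalue equals $1$. It therefore suffices to construct $U$ on a neighbourhood of $(x_0,\xi_0/\abs{\xi_0})$ in the sphere bundle and extend it $0$-homogeneously. Conjugating by a $0$-homogeneous $U$ diagonalizes $\Gamma$ on a whole ray once it does so at one point.

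\textbf{Splitting off a $2\times 2$ block.} Let $\lambda_1\le\dots\le\lambda_n$ be the eigenvalues of $\Gamma_{\bolda(x_0)}(\xi_0)$, and group them into clusters of equal values. I will argue that each cluster has size at most $2$. A cluster of size $\ge 3$ would make the slowness surface (after rescaling) have at least three sheets through one point, or a non-reduced point. Such a point is not a transversal self-intersection of two smooth branches, so the hypothesis excludes it. For each cluster take the Riesz projection $P_j=\frac{1}{2\pi i}\oint_{\gamma_j}(z-\Gamma)^{-1}\,dz$ over a fixed small circle $\gamma_j$. On a small neighbourhood the spectrum stays away from $\gamma_j$, so $P_j$ is smooth, and its rank is constant and equal to the cluster size. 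A smooth orthonormal frame of each range is obtained by Gram--Schmidt applied to $P_j e_{i}$ for suitable fixed vectors $e_i$. For rank-one clusters this already gives a smooth unit eigenvector. Assembling all frames gives a smooth orthogonal matrix $V$ with $V\Gamma V^T$ block diagonal, with $1\times1$ and $2\times2$ blocks. The problem thus reduces to diagonalizing each $2\times 2$ symmetric block $B$ by a smooth rotation $R\in SO(2)$, after which $U=\operatorname{diag}(R,\dots)V$.

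\textbf{The $2\times2$ block and the node condition.} Write
\[
B-\tfrac12(\tr B)I=\begin{pmatrix}\alpha&\beta\\ \beta&-\alpha\end{pmatrix},
\]
so that the eigenvalues are $\tfrac12\tr B\pm\sqrt{\alpha^2+\beta^2}$.

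The ordinary-node hypothesis says that near the crossing the two sheets are smooth hypersurfaces meeting transversally. In terms of eigenvalue branches, I will convert this into the statement that $\alpha^2+\beta^2=f^2$ for a smooth function $f$ with $df\neq0$ on $Z=\{f=0\}$. The smooth eigenvalues are then $\tfrac12\tr B\pm f$, and transversality of the sheets corresponds to $df\ne0$ on $Z$ after accounting for the radial direction via homogeneity. I expect this translation of the geometric hypothesis into the analytic statement to be the main obstacle, and I would do it using the implicit function theorem on each sheet.

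Given this, $Z$ is a smooth hypersurface and $\alpha,\beta$ vanish on it. Hadamard's lemma then gives $\alpha=fa$ and $\beta=fb$ with $a,b$ smooth. On the dense set $\{f\neq0\}$ we have $a^2+b^2=1$, hence everywhere by continuity. Write $(a,b)=(\cos\theta,\sin\theta)$ locally with $\theta$ smooth. Then the rotation $R$ by the angle $\theta/2$ diagonalizes $B$, with diagonal entries $\tfrac12\tr B\pm f$. Finally, after shrinking to a neighbourhood $\bundleset'$ and extending homogeneously, we obtain the required smooth $0$-homogeneous map $U\colon\bundleset'\to O(n)$.
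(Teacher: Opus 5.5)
There is a genuine gap, at exactly the step you flag as the main obstacle. A function $f$, smooth \emph{jointly} in $(x,\xi)$, with $\alpha^2+\beta^2=f^2$ and $\der f\neq0$ on $Z$, amounts to a smooth eigenvalue gap. That is the whole analytic content of the theorem (the first half of the paper's proof), and the implicit function theorem ``on each sheet'' cannot produce it.

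The ordinary-node hypothesis is fiberwise. For fixed $x$ it gives only what Rellich's theorem already gives: a branch of $\lambda_1-\lambda_2$ analytic in $\xi$ with non-zero $\xi$-differential at the crossing (Lemma~\ref{lma:ordinary node-fiber}). No jointly smooth defining function of an individual sheet is available a priori, and nothing controls the dependence on $x$. The missing input is that $Z$ is a smooth hypersurface of the total space, i.e.\ that a crossing occurs in every nearby fiber and moves smoothly with $x$.

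The paper asserts this in Lemma~\ref{lma:smooth-singular-locus-on-bundle}, via the implicit function theorem for $\der_\xi P_{\bolda(x)}=0$ at the invertible fiber Hessian. It then builds $f$ in three steps: the Lipschitz bound with Hadamard's lemma gives $\mu_-^2=\varphi^2H$ for a defining function $\varphi$ of $Z$; fiberwise transversality gives $H>0$ on $Z$; and a sign argument uses vertical analyticity.

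Granting that $Z=\{\varphi=0\}$ is a hypersurface, your division trick is a real shortcut. Write $\alpha=\varphi\tilde a$ and $\beta=\varphi\tilde b$. Fiberwise transversality forces $(\tilde a,\tilde b)\neq0$ on $Z$, so $f=\varphi\sqrt{\tilde a^2+\tilde b^2}$ works without any Lipschitz or sign argument. Your half-angle rotation then replaces the paper's projector construction for the eigenvectors, and your Riesz splitting covers general $n$, whereas the paper's lemmas are set on $\R^2\times\R^2$.

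Be aware, however, that persistence of the crossing does not follow from the hypothesis as stated, which only constrains singularities that actually occur. Generically one sees an avoided crossing, and then both your key step and the conclusion fail. Take $a_{1111}=a_{1212}=1$, $a_{2222}=2$, $a_{1122}=0.2$, $a_{2212}=0.1$, $a_{1112}=x_1$, which is positive definite for small $x_1$. At $x_1=0$ one has $\Gamma(1,0)=I$, and $\xi_0=(1,0)$ is an ordinary node.

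Near $\theta=0$ your traceless entries are $\alpha=(x_1-0.1)\theta+O(\theta^2)$ and $\beta=x_1+1.2\theta+O(\theta^2)$. So for $x_1\neq0$ there is no degeneracy near this direction, and $(x_1,\theta)\mapsto(\alpha,\beta)$ is a local diffeomorphism at the origin. Two things follow:
\begin{enumerate}
\item $\alpha^2+\beta^2$ has a rank-two Hessian at the origin, so it is not $f^2$ for any smooth $f$.
\item The eigenlines have monodromy $-1$ around the codimension-two degenerate set, so no continuous diagonalizing $U$ exists near $(x_0,\xi_0)$.
\end{enumerate}

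The proof of Lemma~\ref{lma:smooth-singular-locus-on-bundle} does not exclude this. $P_\bolda$ is not homogeneous, and for $x_1\neq0$ the critical point of $P_{\bolda(x)}$ near $\xi_0$ has non-zero critical value and is not a degeneracy.

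You therefore need an extra assumption, e.g.\ that for every $x$ near $x_0$ the surface $\Sigma_{\bolda(x)}$ has a singular point near $\xi_0$. Then uniqueness in the implicit function theorem identifies these nodes with the smooth critical point $\xi(x)$ of $P_{\bolda(x)}$, and $Z$ is the cone over $x\mapsto\xi(x)$. The shortcut above then completes your proof.
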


For background on propagation of polarized waves, we refer the reader to~\cite{Hintz2025}.

\subsection{Related results}

The geometry of slowness surfaces and the behavior of elastic wave polarizations have long been studied in anisotropic elasticity. Early work includes that of Yedlin~\cite{Yedlin1980} on wave fronts in homogeneous anisotropic media, Helbig and Carcione~\cite{HelbigCarcione2009} on anomalous polarization phenomena, and Mazzucato and Rachele~\cite{MazzucatoRachele2008}, who investigated transversely isotropic elastic media with ellipsoidal slowness surfaces. These works highlight the intimate relationship between the geometry of the slowness surface, polarization, and wave propagation.

More recently, algebraic geometric methods have proved useful in the study of slowness surfaces and elastic inverse problems. In \cite{dHILV2023}, techniques from algebraic geometry are used to show that, for a generic anisotropic stiffness tensor, local information around a single polarization uniquely determines the entire slowness surface and, consequently, the underlying stiffness tensor. The reconstruction is made effective through Gröbner basis computations, illustrating the usefulness of computational algebraic geometry in anisotropic elasticity.

The present work builds in particular on~\cite{IKK2025}, where the occurrence of singularities for two-dimensional slowness surfaces was characterized. In particular, it was shown that the singular points of the slowness surface coincide precisely with the degeneracy points of the Christoffel matrix~\cite[Proposition~8]{IKK2025}. More precisely, it was shown that there are explicit polynomials $R$ and $D$ in the stiffness tensor components $a_{ijkl}$ so that $\Sigma_\bolda$ has a singularity if and only if $R(\bolda) = 0$ and $D(\bolda) > 0$.

%
%

The present paper strengthens this characterization by determining the type of singularities that occur. More precisely, Theorem~\ref{classification} shows that every singularity is either of ordinary node or ordinary tacnode, while Theorem~\ref{thm:classification-v2} gives a purely algebraic classification of the tangential (or ordinary tacnode) singularities using the polynomial conditions~\eqref{eq:ordinary tacnode_sing_polys-full}. Thus, the algebraic characterization of the singular locus obtained in \cite{IKK2025} is refined into a complete algebraic classification of the local singularity type. Our work does not follow this line of work directly, since the involutivity condition assumed in~\cite{Dencker1995} is not necessarily satisfied in our setting (see Appendix~\ref{app:involutive-fails}). 

On the analytic side, our diagonalization results are related to the microlocal analysis of hyperbolic systems and the propagation of polarization sets. Dencker~\cite{Dencker1982,Dencker1988,Dencker1995} established the propagation of the polarization set for various types of systems. Also, Braam and Duistermaat~\cite{BraamDuistermaat1993}, Colin de Verdi\`ere~\cite{ColinDeVerdiere2003,ColinDeVerdiere2004}, Melrose and Uhlmann~\cite{MelroseUhlmann1979,Uhlmann1982} studied related phenomena arising from eigenvalue multiplicities and conical refraction. More recently, \cite{OSSU2024} established diagonalization results for systems of real principal type under geometric assumptions on the characteristic variety, and related propagation results were obtained in \cite{darwich2023propagation}. 

Beyond the local analysis considered here, anisotropic elasticity has motivated a broad range of inverse problems. We refer, for example, to the inverse boundary value problems of \cite{NakamuraUhlmann1994,EskinRalston2002,ImanuvilovUhlmannYamamoto2012,BerettaFranciniVessella2014,BarceloEtAl2018}, to inverse problems for anisotropic and transversely isotropic media \cite{MazzucatoRachele2006,MazzucatoRachele2007,NakamuraTanumaUhlmann1999,SacksYakhno1998,DeHoopUhlmannVasy2020,Zou2021}, and to recent developments connecting the geometry of the qP slowness surface with Finsler geometry and geometric inverse problems \cite{DeHoopIlmavirtaLassasSaksala2021,DeHoopIlmavirtaLassasSaksala2023,DeHoopIlmavirtaLassas2025,IlmavirtaMonkkonen2022,PaternainSaloUhlmann2023,UhlmannVasy2016}.

\subsection{Definitions}
\label{sec:defs}
For a finite-dimensional real inner product space $E$, we denote by $\Sym^2(E)$ the symmetric tensor square, identified with the space of quadratic forms on $E$. The cone $\Sym^2_+(E)\subset\Sym^2(E)$ consists of the positive definite quadratic forms.

A stiffness tensor $\bolda\in\Sym^2_+(\Sym^2(\R^n))$ has components satisfying the elastic symmetries
$a_{ijkl}=a_{jikl}=a_{klij}$.
The positivity condition is
$\sum_{i,j,k,l}a_{ijkl}A_{ij}A_{kl}>0$ pointwise for every non-zero symmetric matrix $A$.

In dimension $n=2$, the slowness polynomial is explicitly
\begin{equation}
\label{eq:general_slowness_polynomial_in_dimension_two}
\begin{aligned}
P_{\bolda}(x, y)
&=
\det(\Gamma_{\bolda}(x, y)-I)
\\&=
(a_{1111}a_{1212} - a_{1112}^{2})x^{4}
+ (2a_{1111}a_{2212} - 2a_{1112}a_{1122})x^{3}y
\\&
\quad
+ (a_{1111}a_{2222} + 2a_{1112}a_{2212} - a_{1122}^{2} - 2a_{1122}a_{1212})x^{2}y^{2}
- (a_{1111} + a_{1212})x^{2}
\\&
\quad
+ (2a_{1112}a_{2222} - 2a_{2212}a_{1122})xy^{3}
- (2a_{1112} + 2a_{2212})xy
\\&
\quad
+ (a_{1212}a_{2222} - a_{2212}^{2})y^{4}
- (a_{1212} + a_{2222})y^{2}
+ 1
\in k[x,y].
\end{aligned}
\end{equation}
Its vanishing set $\Sigma_\bolda=\V(P_\bolda)$ is the planar slowness surface.

\subsection{Organization of the article}

The paper is organized as follows. The remainder of the introduction fixes notation and recalls the basic definitions. Section~\ref{sec:algebraic_proofs} contains the algebraic proofs of our main Theorems~\ref{classification},~\ref{thm:classification-v2} and~\ref{thm:main2}; as well as proofs of the necessary lemmas. Section~\ref{sec:analytic_proofs} proves the Analytic Diagonalization Theorem~\ref{thm:ordinary node->diagonalizable} and the needed lemmas. The appendices include a short discussion on how the elastic wave operator with fiberwise ordinary nodes on slowness surfaces relates to systems of transverse type as defined in ~\cite{Dencker1995} and a record of the computer algebra calculations used in the algebraic arguments. We keep the background on the elastic wave operator short as to not repeat ourselves since these have been covered already in other articles such as~\cite{dHILV2023} and~\cite{IKK2025}.

\subsection{Acknowledgements}

Antonio Cocan and Joonas Ilmavirta were supported by the Research Council of Finland (Flagship of Advanced Mathematics for Sensing Imaging and Modelling grant 359208; Centre of Excellence of Inverse Modelling and Imaging grant 353092; and other grants 351665, 351656, 358047, 360434) and a Väisälä project grant by the Finnish Academy of Science and Letters. 
Maarten V. de Hoop was supported by the Simons Foundation under the MATH + X program, the National Science Foundation under grant DMS-1815143, and the corporate members of the Geo-Mathematical Imaging Group at Rice University.
Pieti Kirkkopelto was supported by the Finnish Ministry of Education and Culture’s Pilot for Doctoral Programmes (Pilot project Mathematics of Sensing, Imaging and Modeling) and the Research Council of Finland
(Flagship of Advanced Mathematics for Sensing Imaging and Modelling grant 359208; Centre of Excellence of Inverse Modelling and Imaging grant 353092).
Antti Kykkänen was supported by the Geo-Mathematical Imaging Group at Rice University.

\section{Algebraic proofs}
\label{sec:algebraic_proofs}

In this section we assume basic knowledge of algebraic geometry.
For an introduction, we recommend~\cite{CLO2025,SK2000}.

\subsection{Lemmas}

In this section we gather the necessary lemmas needed for the proofs of our algebro-geometric Theorems~\ref{classification},~\ref{thm:classification-v2} and~\ref{thm:main2}. Let $f_\bolda(\theta)$ be the discriminant of the characteristic polynomial of $\Gamma_\bolda(p)$ at $p=(\cos\theta,\sin\theta)$.

\begin{lemma}
\label{lma:f=0}
There is a singularity on the slowness surface $\Sigma_\bolda$ in the direction $\theta$ if and only if $f_\bolda(\theta)=0$. This implication does not require positive definiteness.
\end{lemma}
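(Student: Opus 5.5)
Since $\Gamma_\bolda(\xi)$ is quadratic in $\xi$, we work in polar coordinates: write $\xi=r p$ with $p=(\cos\theta,\sin\theta)$ and $r>0$, and set $M=\Gamma_\bolda(p)$. Then $\Gamma_\bolda(\xi)=r^2 M$. Let $\lambda_1,\lambda_2$ be the eigenvalues of $M$. This gives the factorization
\[
P_\bolda(rp)=\det(r^2M-I)=(r^2\lambda_1-1)(r^2\lambda_2-1).
\]
Along the ray in direction $\theta$, the points of $\Sigma_\bolda$ are therefore $r^2=1/\lambda_1$ and $r^2=1/\lambda_2$. The discriminant is $f_\bolda(\theta)=(\lambda_1-\lambda_2)^2=(\tr M)^2-4\det M$.

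\emph{($\Leftarrow$)} Suppose $f_\bolda(\theta)=0$, so $\lambda_1=\lambda_2=:\lambda$. If $\lambda\neq 0$, we take $\xi_0=r_0p$ with $r_0^2=1/\lambda$. Then $P_\bolda(\xi)=g(\xi)^2$ locally near $\xi_0$ along the ray, but this factorization holds only along the ray, so we argue directly instead. Since $\Gamma_\bolda(\xi_0)-I$ has the double eigenvalue $0$, we use $\nabla\det(A)=\operatorname{adj}(A)$ and the chain rule:
\[
\nabla P_\bolda(\xi_0)=\tr\bigl(\operatorname{adj}(\Gamma_\bolda(\xi_0)-I)\,\partial\Gamma_\bolda\bigr).
\]
In the $2\times2$ case we have $\operatorname{adj}(B)=(\tr B)I-B$. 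For $B=\Gamma_\bolda(\xi_0)-I$ with both eigenvalues $0$, this gives $\operatorname{adj}(B)=-B$. If $M$ is diagonalizable (for example when $M$ is real symmetric), then $B=0$, hence $\operatorname{adj}(B)=0$ and $\nabla P_\bolda(\xi_0)=0$, so $\xi_0$ is a singular point.

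Without positive definiteness, $M$ is still symmetric, but in the complex case a symmetric matrix with a repeated eigenvalue need not be scalar. So we argue on the polynomial itself instead. Write $M=\lambda I+N$ with $N$ nilpotent and $\det N=\tr N=0$. Then $\tr(\operatorname{adj}(B)\,\partial\Gamma)=-\tr(N\,\partial\Gamma)$, which need not vanish. To handle this we expect to use the exact identity
\[
P_\bolda(\xi)=\det\Gamma_\bolda(\xi)-\tr\Gamma_\bolda(\xi)+1.
\]
Along the ray this equals $\det M\,r^4-\tr M\,r^2+1=(\lambda r^2-1)^2$, which vanishes to second order in $r$. In the $\theta$ direction, the derivative at $\xi_0$ is $\partial_\theta(\det M)/\lambda^2-\partial_\theta(\tr M)/\lambda$.

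\emph{($\Rightarrow$)} Suppose $\xi_0=r_0p$ is a singular point. First, $\xi_0\neq0$ because $P_\bolda(0)=1$. Using the factorization, $\partial_rP_\bolda$ at $\xi_0$ equals
\[
2r_0\bigl(\lambda_1(r_0^2\lambda_2-1)+\lambda_2(r_0^2\lambda_1-1)\bigr).
\]
Suppose one factor vanishes, say $r_0^2\lambda_1=1$. Then $\partial_r P_\bolda=0$ forces $\lambda_1(r_0^2\lambda_2-1)=0$, so $r_0^2\lambda_2=1$ and hence $\lambda_1=\lambda_2$, i.e.\ $f_\bolda(\theta)=0$. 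This direction is robust and uses only symmetric polynomial identities, so it holds for complex tensors.

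\textbf{Main obstacle.} The difficult part is ($\Leftarrow$), specifically the tangential derivative. The key input is that $f_\bolda(\theta)=(\tr M)^2-4\det M$ is a nonnegative function of $\theta$ in the real symmetric case: it is the sum of squares $(m_{11}-m_{22})^2+4m_{12}^2$. Hence every zero of $f_\bolda$ is a double zero and $\partial_\theta f_\bolda=0$ there. Combined with $\tr M=2\lambda$ and $\det M=\lambda^2$, we get $\partial_\theta\det M=\lambda\,\partial_\theta\tr M$, which makes the $\theta$-derivative above vanish.

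For complex stiffness tensors, which are needed in Theorem~\ref{thm:main2}, the ``$\Leftarrow$'' direction may genuinely need the hypothesis that $M$ is scalar at that $\theta$. The sum-of-squares identity shows this is automatic in the real case. We would state explicitly that the positive-definiteness-free claim uses real symmetry, or else check it by computer algebra. The case $\lambda=0$ corresponds to no finite point on the ray; there we interpret the statement projectively, or note that it is excluded by positivity.
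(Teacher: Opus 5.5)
Your proof is correct for real tensors, which is also the only case the paper's proof treats, but it is organized differently. The paper's proof is a one-liner that implicitly relies on the characterization from \cite{IKK2025}: singular points of $\Sigma_\bolda$ are exactly the points where $\Gamma_\bolda(\xi)-I=0$, i.e.\ eigenvalue $1$ of geometric multiplicity two. It then uses that for real symmetric matrices geometric and algebraic multiplicities agree, so this is the same as a double root of the characteristic polynomial, i.e.\ $f_\bolda(\theta)=0$.

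You instead prove that characterization directly in dimension two. For ($\Leftarrow$) you use Jacobi's formula and $\operatorname{adj}(B)=(\tr B)I-B=-B=0$; this is exactly where real symmetry enters. For ($\Rightarrow$) you use the radial derivative of $(r^2\lambda_1-1)(r^2\lambda_2-1)$, which needs neither symmetry nor reality. This costs a few more lines, but it is self-contained and precise about hypotheses. It shows that ($\Rightarrow$) holds over $\C$. It also shows that ($\Leftarrow$) without positive definiteness needs $\lambda\neq0$: for $\bolda=0$ one has $f_\bolda\equiv0$ but $\Sigma_\bolda=\emptyset$. If the singular point is to be real you even need $\lambda>0$, since for $\lambda<0$ your $r_0$ is imaginary.

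When writing it up, drop the abandoned remark about $P_\bolda=g^2$ along the ray, and reframe the \emph{Main obstacle} paragraph. For real tensors there is no obstacle once $\operatorname{adj}(B)=0$, and the sum-of-squares argument is a second proof of the same fact, because $(m_{11}-m_{22})^2+4m_{12}^2=0$ over $\R$ already forces $M=\lambda I$.

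Its real use is the complex case, and there your own computation gives a sharp criterion, which is better than your tentative hypothesis that $M$ be scalar. Indeed $\partial_\theta P_\bolda(r_0p(\theta))=\det M'/\lambda^2-\tr M'/\lambda=-f'_\bolda(\theta)/(4\lambda^2)$. So for $\lambda\neq0$ the point $r_0p$ is singular if and only if $f_\bolda(\theta)=f'_\bolda(\theta)=0$. Scalar $M$ is sufficient but not necessary: items (1)--(2) of the \textsc{Maxima} output in the proof of Lemma~\ref{lma:f''=0} give non-scalar complex $M$ satisfying it. At a simple zero of $f_\bolda$ the implication ($\Leftarrow$) genuinely fails.
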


\begin{lemma}
\label{lma:f>0}
For every positive definite $\bolda$, the function $f_\bolda\colon S^1\to\R$ is smooth and non-negative. In particular, $f_\bolda(\theta)=0$ implies $f'_\bolda(\theta)=0$.
\end{lemma}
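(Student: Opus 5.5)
The key is that $f_\bolda(\theta)$ is the discriminant of a real symmetric $2\times 2$ matrix, and such a discriminant is a sum of squares. Write $\Gamma_\bolda(p)=\begin{pmatrix}\alpha(\theta)&\beta(\theta)\\ \beta(\theta)&\gamma(\theta)\end{pmatrix}$ with $p=(\cos\theta,\sin\theta)$. By the definition of the Christoffel matrix, $\alpha=\sum_{k,l}a_{1kl1}p_kp_l$, $\beta=\sum_{k,l}a_{1kl2}p_kp_l$ and $\gamma=\sum_{k,l}a_{2kl2}p_kp_l$. Each of these is a trigonometric polynomial in $\theta$ with real coefficients, hence real-analytic and in particular smooth on $S^1$. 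The symmetry $\beta_{12}=\beta_{21}$ follows from $a_{ijkl}=a_{klij}=a_{jikl}$, since $a_{1kl2}=a_{l21k}=a_{2lk1}$, and then the $k\leftrightarrow l$ summation symmetry gives the claim.

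The characteristic polynomial is $\lambda^2-(\alpha+\gamma)\lambda+(\alpha\gamma-\beta^2)$. Its discriminant is
\begin{equation*}
f_\bolda(\theta)=(\alpha+\gamma)^2-4(\alpha\gamma-\beta^2)=(\alpha-\gamma)^2+4\beta^2 .
\end{equation*}
This is a polynomial in the smooth functions $\alpha,\beta,\gamma$, so $f_\bolda$ is smooth. It is a sum of squares of real numbers, so $f_\bolda\ge 0$.

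The argument uses only that $\bolda$ has real entries and the stated symmetries; positive definiteness is not needed. Positive definiteness does guarantee that $\Gamma_\bolda(p)$ is positive definite, so the eigenvalues are real and positive. That is consistent with the conclusion but not required for it. If one insists on the convention that $f_\bolda$ is the discriminant of the characteristic polynomial in the variable of the slowness polynomial $P_\bolda$, the result differs only by a positive factor or a power of the radial variable. This does not affect smoothness or sign, and I would note this in a remark.

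For the final claim, if $f_\bolda(\theta_0)=0$, then $\theta_0$ is a global minimum of the smooth function $f_\bolda$ on $S^1$, so $f'_\bolda(\theta_0)=0$ by Fermat's theorem. Alternatively, one can compute directly: $f'=2(\alpha-\gamma)(\alpha'-\gamma')+8\beta\beta'$, and both $\alpha-\gamma$ and $\beta$ vanish at $\theta_0$.

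The only step needing care is checking that the off-diagonal entries coincide, so that the discriminant really takes the sum-of-squares form. This is immediate from the elastic symmetries, so I expect no genuine obstacle.
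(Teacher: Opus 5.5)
Your proof is correct and follows the paper's argument. The paper writes $f_\bolda=\tr(\Gamma)^2-4\det\Gamma$ composed with $\theta\mapsto(\cos\theta,\sin\theta)$ to get smoothness, obtains $f_\bolda\ge 0$ from the spectral theorem (real eigenvalues), and concludes $f'_\bolda=0$ at zeros because they are minima. Your identity $f_\bolda=(\alpha-\gamma)^2+4\beta^2$ is just the $2\times 2$ spectral theorem made explicit, and your observation that positive definiteness is not needed applies equally to the paper's proof.
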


\begin{lemma}
\label{lma:f''=0}
Let $\bolda$ be a positive definite stiffness tensor.
The conditions $f_\bolda(0)=f'_\bolda(0)=f''_\bolda(0)=0$ hold if and only if
\begin{equation}
\label{eq:ellipse-tensor}
\begin{cases}
a_{11}=r \\
a_{12}=-r \\
a_{13}=0 \\
a_{22}=t \\
a_{23}=0 \\
a_{33}=r
\end{cases}
\end{equation}
for some $t>r>0$.
\end{lemma}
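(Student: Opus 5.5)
The plan is a direct computation. I read the indices in \eqref{eq:ellipse-tensor} in Voigt notation ($1\leftrightarrow 11$, $2\leftrightarrow 22$, $3\leftrightarrow 12$), so $a_{11}=a_{1111}$, $a_{12}=a_{1122}$, $a_{13}=a_{1112}$, $a_{22}=a_{2222}$, $a_{23}=a_{2212}$, $a_{33}=a_{1212}$. The key observation is that the discriminant of the characteristic polynomial of a real symmetric $2\times2$ matrix is a sum of squares. For $p=(c,s)=(\cos\theta,\sin\theta)$,
\begin{equation*}
f_\bolda(\theta)=(\Gamma_{11}-\Gamma_{22})^2+4\Gamma_{12}^2=:g(\theta)^2+4h(\theta)^2,
\end{equation*}
where, from the definition of $\Gamma_\bolda$,
\begin{equation*}
g=(a_{11}-a_{33})c^2+2(a_{13}-a_{23})cs+(a_{33}-a_{22})s^2,\qquad h=a_{13}c^2+(a_{12}+a_{33})cs+a_{23}s^2 .
\end{equation*}
This already explains Lemma~\ref{lma:f>0}, and it is the structure I will exploit.

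Since all quantities are real, $f_\bolda(0)=0$ is equivalent to $g(0)=h(0)=0$, that is, $a_{11}=a_{33}$ and $a_{13}=0$. Given this, $f'_\bolda(0)=2gg'+8hh'$ vanishes automatically at $\theta=0$. Moreover,
\begin{equation*}
f''_\bolda(0)=2g'(0)^2+8h'(0)^2,
\end{equation*}
because the terms containing $g(0)$ or $h(0)$ drop out. Again by reality, $f''_\bolda(0)=0$ is equivalent to $g'(0)=h'(0)=0$. Differentiating at $\theta=0$ (where $c=1$, $s=0$, $c'=0$, $s'=1$) gives $g'(0)=2(a_{13}-a_{23})$ and $h'(0)=a_{12}+a_{33}$. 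Hence $a_{23}=0$ and $a_{12}=-a_{33}$.

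Writing $r=a_{33}$ and $t=a_{22}$, the three derivative conditions are therefore equivalent to the system \eqref{eq:ellipse-tensor}, with no constraint yet on $r$ and $t$. The converse direction is simply reading the same computation backwards: substituting \eqref{eq:ellipse-tensor} makes $g(0),h(0),g'(0),h'(0)$ vanish, and hence $f_\bolda(0)=f_\bolda'(0)=f_\bolda''(0)=0$.

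It remains to show that positive definiteness is equivalent to $t>r>0$ for tensors of the form \eqref{eq:ellipse-tensor}. For such a tensor the quadratic form on symmetric matrices $A$ is
\begin{equation*}
rA_{11}^2-2rA_{11}A_{22}+tA_{22}^2+4rA_{12}^2,
\end{equation*}
since the mixed terms with $a_{13}$ and $a_{23}$ vanish. This form is positive definite if and only if $r>0$ and the block $\begin{pmatrix} r&-r\\-r&t\end{pmatrix}$ is positive definite, that is, $r>0$ and $rt-r^2>0$, which together mean $t>r>0$.

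The only real obstacle is bookkeeping: fixing the index convention correctly and not losing the factors of $2$ in $\Gamma_{12}$ and in the positivity form. I will double-check the formula for $h$ against the coefficients of the slowness polynomial \eqref{eq:general_slowness_polynomial_in_dimension_two}.
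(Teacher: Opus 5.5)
Your proof is correct, and it takes a genuinely different route from the paper's.

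\textbf{The paper's route.} The paper feeds the three equations $f_\bolda(0)=f'_\bolda(0)=f''_\bolda(0)=0$ to \textsc{Maxima}'s \texttt{solve} over $\C$. This produces nine parametric, partly non-real, solution families. The paper then discards everything incompatible with a real tensor and observes that only families (3)--(4), with $s=-r$ and $t>r$, give positive definite tensors. The converse is checked by substituting back.

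\textbf{Your route.} You use the identity $f_\bolda=(\Gamma_{11}-\Gamma_{22})^2+4\Gamma_{12}^2$ together with reality of $\bolda$. This turns the three conditions into the four \emph{linear} equations $g(0)=h(0)=g'(0)=h'(0)=0$. Your Voigt identification is the one used in Appendix~\ref{app:maxima}, and your formulas for $g$, $h$, $g'(0)=2(a_{13}-a_{23})$ and $h'(0)=a_{12}+a_{33}$ agree with the Christoffel matrix encoded there. The positivity form $rA_{11}^2-2rA_{11}A_{22}+tA_{22}^2+4rA_{12}^2$ is also right.

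\textbf{What each approach buys.} Your argument is short and checkable by hand. It does not depend on the completeness of a symbolic \texttt{solve}, nor on the somewhat informal passage from complex families to real ones. It also makes visible that the equivalence with \eqref{eq:ellipse-tensor} for arbitrary real $r,t$ needs only reality, and that positive definiteness serves solely to force $t>r>0$.

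The paper's computation additionally records the complex solution set. There $g^2+4h^2=(g+2ih)(g-2ih)$ can vanish to third order at $\theta=0$ without $g$ and $h$ both vanishing to second order; for instance, family (3) with $s\neq -r$ has $g'(0)+2ih'(0)=0$. That is where the non-real families in the output come from. This information is not needed for the lemma itself.
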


\begin{lemma}
\label{lma:f''>0}
Let $\bolda$ be a positive definite stiffness tensor.
If $f_\bolda(0)=f'_\bolda(0)=0$ and $f''_\bolda(0)\neq0$, then the singularity in direction $\theta=0$ is an ordinary node.
\end{lemma}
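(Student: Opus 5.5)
Work in polar coordinates $\xi=s(\cos\theta,\sin\theta)$. By homogeneity $\Gamma_\bolda(\xi)=s^2\Gamma_\bolda(p_\theta)$ with $p_\theta=(\cos\theta,\sin\theta)$. Let $\lambda_\pm(\theta)=\tfrac12\bigl(\tr\Gamma_\bolda(p_\theta)\pm\sqrt{f_\bolda(\theta)}\bigr)$ be the eigenvalues, where $f_\bolda=(\tr\Gamma)^2-4\det\Gamma$ is the discriminant. Positive definiteness makes $\lambda_\pm>0$. Hence $\Sigma_\bolda$ is, away from the origin, the union of the two branches $s=\lambda_\pm(\theta)^{-1/2}$. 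By Lemma~\ref{lma:f=0} the singular point in direction $\theta=0$ is where these branches meet, i.e.\ where $f_\bolda(0)=0$. The task is to show that when $f''_\bolda(0)\neq0$ the two branches are smooth curves crossing transversally there, which is exactly an ordinary node.

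First I will use Lemma~\ref{lma:f>0}: $f_\bolda\ge0$ is smooth with $f_\bolda(0)=f'_\bolda(0)=0$, so $f''_\bolda(0)>0$. Write $f_\bolda(\theta)=\theta^2g(\theta)$ with $g$ smooth and $g(0)=f''_\bolda(0)/2>0$, and set $h(\theta)=\theta\sqrt{g(\theta)}$. Then $h$ is smooth near $0$, $h^2=f_\bolda$, and $h'(0)=\sqrt{g(0)}\neq0$. Define
\[
\mu_\pm(\theta)=\tfrac12\bigl(\tr\Gamma_\bolda(p_\theta)\pm h(\theta)\bigr).
\]
These are smooth near $0$ and are the two eigenvalues, relabelled across $\theta=0$. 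The slowness polynomial then factors as $P_\bolda(s p_\theta)=(s^2\mu_+(\theta)-1)(s^2\mu_-(\theta)-1)$. Near the point $(\theta,s)=(0,s_0)$ with $s_0=\mu_\pm(0)^{-1/2}$, the curve $\Sigma_\bolda$ is therefore the union of the two smooth graphs $s=\sigma_\pm(\theta):=\mu_\pm(\theta)^{-1/2}$.

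Second, I will check transversality by comparing slopes. We have
\[
\sigma_+'(0)-\sigma_-'(0)=-\tfrac12 s_0^{3}\bigl(\mu_+'(0)-\mu_-'(0)\bigr)=-\tfrac12 s_0^3\,h'(0)\neq0,
\]
since $\mu_+(0)=\mu_-(0)$. So the two graphs are distinct smooth curves with different tangent lines at the common point. Polar coordinates are a local diffeomorphism away from the origin, and $s_0>0$, so the same holds in the $(x,y)$-plane.

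Finally, I need this to match the algebraic definition of an ordinary node: a double point whose tangent cone consists of two distinct lines. The Hessian of $P_\bolda$ at the point is, up to a nonzero smooth factor, the product of the differentials of the two local factors $s^2\mu_\pm(\theta)-1$. Each factor has nonzero $\partial_s$-derivative $2s_0\mu_\pm(0)$, so each defines a smooth local branch, and their tangent lines differ. Hence the quadratic part of $P_\bolda$ is a product of two independent linear forms, which is a nondegenerate quadratic form. The main obstacle is the smooth square root $h$, which requires the nonnegativity from Lemma~\ref{lma:f>0}. Without it, $f_\bolda$ could change sign and the factorization would not be real.
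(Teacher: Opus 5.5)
Your proof is correct, but it takes a different route from the paper's. The paper's argument is a short Hessian computation. It asserts that the Hessian determinant of $P_\bolda$ at the singular point equals $-2f''_\bolda(0)$, and uses $f_\bolda\ge0$ to get $f''_\bolda(0)>0$, which is the same use of nonnegativity as yours. It concludes that the Hessian is nondegenerate and indefinite, and then invokes the Morse lemma to obtain $z_1^2-z_2^2$.

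You instead take a smooth square root $h$ of the discriminant. The paper only uses this device later, for $\mu_-$, in the diagonalization proof of Section~\ref{sec:analytic_proofs}. It gives you an explicit factorization of $P_\bolda$ into two smooth local factors whose zero sets cross with different slopes because $h'(0)\neq0$.

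Your route buys self-containment: the paper gives no derivation of its Hessian identity, while your factorization proves it. With $F_\pm=s^2\mu_\pm(\theta)-1$ one has $\der F_\pm=s_0^2\mu_\pm'(0)\,\der\theta+(2/s_0)\,\der s$ at $(0,s_0)$. So the Hessian $\der F_+\otimes\der F_-+\der F_-\otimes\der F_+$ has determinant $-(2/s_0)^2(s_0^2h'(0))^2=-2s_0^2f''_\bolda(0)$. Dividing by the squared Jacobian $s_0^2$ of the polar map recovers exactly the paper's $-2f''_\bolda(0)$.

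Your route also makes the Morse lemma unnecessary. Write $G_\pm(\xi)=\abs{\xi}^2\mu_\pm(\theta(\xi))-1$. Near the point $P_\bolda=G_+G_-$ with $\der G_\pm$ independent, so $z_{1,2}=(G_+\pm G_-)/2$ are local coordinates in which $P_\bolda=z_1^2-z_2^2$. The paper's version is shorter once the determinant identity is granted.

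One small wording point: the Cartesian Hessian is not the polar one ``up to a nonzero smooth factor''. It is congruent to it via the Jacobian of the polar map, which is legitimate because $\der P_\bolda$ vanishes at the point. Working with $G_\pm$ directly in Cartesian coordinates sidesteps this.
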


\begin{lemma}
\label{lem:complex_circleellipse_no_real}
Let $(r,a,b,c)\in\C^4$ and $(a_{11},\ldots,a_{33})\in\R^6$ satisfy the coefficient comparison and tangency system~\eqref{eq:tangency_generators_system}. Then $r,a,b,c\in\R$. In particular, complex circle--ellipse parameters do not produce additional real stiffness tensors in this system.
\end{lemma}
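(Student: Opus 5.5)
The plan is to read off reality of the parameters directly from the linear and quadratic relations in \eqref{eq:tangency_generators_system}. I take that system to encode the factorization
\[
P_\bolda(x,y)=\bigl(r(x^2+y^2)-1\bigr)\bigl(ax^2+bxy+cy^2-1\bigr),
\]
via comparison of coefficients with \eqref{eq:general_slowness_polynomial_in_dimension_two}, together with the tangency condition. Tangency of a concentric circle and conic means the difference of the quadratic forms $(a-r)x^2+bxy+(c-r)y^2$ is degenerate, i.e.
\[
4(a-r)(c-r)=b^2 .
\]
If the paper's system is written slightly differently, I would first rewrite it in this form. I use Voigt notation $a_{11}=a_{1111}$, $a_{12}=a_{1122}$, $a_{13}=a_{1112}$, $a_{22}=a_{2222}$, $a_{23}=a_{2212}$, $a_{33}=a_{1212}$, all of which are real by hypothesis.

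First I compare the quadratic coefficients. The $xy$ coefficient gives $b=2a_{13}+2a_{23}\in\R$. The $x^2$ coefficient gives $r+a=a_{11}+a_{33}$, and the $y^2$ coefficient gives $r+c=a_{22}+a_{33}$. Hence $a=s_1-r$ and $c=s_2-r$ with $s_1,s_2\in\R$, so everything is real once $r$ is. Next I compare the quartic coefficients. The $x^4$ coefficient gives $ra=a_{11}a_{33}-a_{13}^2=:q_1\in\R$, and the $y^4$ coefficient gives $rc=a_{22}a_{33}-a_{23}^2=:q_2\in\R$. Substituting, I get $r(s_1-r)=q_1$ and $r(s_2-r)=q_2$. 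Subtracting these yields
\[
r(s_1-s_2)=q_1-q_2 ,
\]
so if $a_{11}\neq a_{22}$ then $r=(q_1-q_2)/(a_{11}-a_{22})$ is real.

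The remaining case, $a_{11}=a_{22}$, is the only point needing care, and here I use tangency. In this case $s_1=s_2$, so $a=c$. The tangency relation becomes $4(a-r)^2=b^2$, so $a-r=\pm b/2\in\R$. Combined with $a+r=s_1\in\R$, this gives $r=\tfrac12\bigl(s_1\mp b/2\bigr)\in\R$. Hence in all cases $r\in\R$, and then $a=s_1-r$, $c=s_2-r$ and $b$ are real. The remaining coefficient equations (the $x^3y$, $x^2y^2$ and $xy^3$ terms) are not needed; they only constrain $\bolda$ further.

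The main obstacle I expect is purely bookkeeping: matching the precise form of \eqref{eq:tangency_generators_system}. If the paper writes tangency via a resultant or discriminant instead of $4(a-r)(c-r)=b^2$, I would first show it reduces to this degeneracy condition. I would also check that the possibility $r$, $a$ non-real conjugates, which is allowed by the quadratic $r^2-s_1r+q_1=0$ alone, is indeed excluded by the case split above.
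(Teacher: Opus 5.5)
Your proof is correct, but it takes a different route from the paper's. In the paper's system the ellipse is $ax^2+2cxy+by^2-1$. So your $b$ and $c$ are the paper's $2c$ and $b$, and your condition $4(a-r)(c-r)=b^2$ is exactly the paper's $(a-r)(b-r)=c^2$.

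The paper splits on whether the mixed coefficient $c=a_{13}+a_{23}$ vanishes:
\begin{itemize}
\item If $c\neq0$, it solves the $xy^3$ equation $-2rc-2a_{12}a_{23}+2a_{13}a_{22}=0$ for $r$.
\item If $c=0$, tangency factors as $(r-a)(r-b)=0$, and then $2r=a_{11}+a_{33}$ or $2r=a_{22}+a_{33}$.
\end{itemize}
You instead split on $a_{11}=a_{22}$. You use the pure quartic $x^4$, $y^4$ equations to get the linear relation $r(a_{11}-a_{22})=q_1-q_2$, and fall back on tangency only when this relation is vacuous. Both arguments need tangency only in a degenerate case and are equally short.

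Your closing worry is unfounded, and it actually points to an even simpler proof. By the $x^2$ and $x^4$ equations, $r$ and $a$ are the two roots of $t^2-(a_{11}+a_{33})t+(a_{11}a_{33}-a_{13}^2)$. These are the eigenvalues of the real symmetric matrix $\begin{pmatrix}a_{11}&a_{13}\\a_{13}&a_{33}\end{pmatrix}=\Gamma_\bolda(1,0)$. The discriminant is $(a_{11}-a_{33})^2+4a_{13}^2\geq0$, so $r$ and $a$ cannot be non-real conjugates. Hence $r,a\in\R$ with no case split, and the $y^2$ and $xy$ coefficient equations then give the remaining two parameters. In particular, the tangency equation is not needed for the reality conclusion at all, in either your argument or the paper's.
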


\subsection{Proofs of algebro-geometric theorems}

We now proceed to prove our algebro-geometric Theorems~\ref{classification},~\ref{thm:classification-v2} and~\ref{thm:main2}.

\begin{proof}[Proof of Theorem \ref{classification}]
Let $\theta$ correspond to a singular point of the slowness polynomial $P_\bolda$. If the Hessian of $P_\bolda$ at this point is non-degenerate, Lemma~\ref{lma:f''>0} shows that the singularity is an ordinary node. If the Hessian is degenerate, Theorem~\ref{thm:classification-v2} shows that the singularity is tangential.
\end{proof}

\begin{proof}[Proof of Theorem~\ref{thm:classification-v2}]

$(1) \implies (2)$: Suppose first that the ordinary tacnode singularity occurs in the direction $\theta=0$; after a rotation of the coordinates, this entails no loss of generality.
By Lemmas~\ref{lma:f=0} and~\ref{lma:f>0} we have that $f_\bolda(0)=f'_\bolda(0)=0$.
By Lemma~\ref{lma:f''>0} untidiness implies that $f''_\bolda(0)=0$, and thus by Lemma~\ref{lma:f''=0} the stiffness tensor is of the form~\eqref{eq:ellipse-tensor}.
In this case the slowness polynomial is
\begin{equation}
\label{eq:ellipse-polynomial}
P_\bolda(p)
=
(rp_1^2+rp_2^2-1)
(rp_1^2+tp_2^2-1)
\end{equation}
so the slowness surface is the union of a circle and an ellipse tangent at $(\pm r^{-1/2},0)$.

$(2) \implies (1)$: Conversely, a tangency point of such a circle--ellipse union cannot be a non-degenerate critical point of the slowness polynomial. Otherwise Lemma~\ref{lma:f''>0} would give a crossing-type singularity, contradicting tangency.

$(2) \iff (3)$ This equivalence is obtained by elimination using \textsc{Magma}, which can be found in \texttt{algebraic\_characterization.magma}. Consider affine real space $\mathbb{A}^{6}_{\bolda}$ with coordinate ring $A = \mathbb{R}[\bolda]$ and affine real space $\mathbb{A}^{4}_{r, a, b, c}$ with coordinate ring $B = \mathbb{R}[r, a, b, c]$. A circle polynomial and an ellipse polynomial are given respectively by
\begin{equation}
    \begin{aligned}
        C &:= rx^2+ry^2-1, \\
        E &:= ax^2+2cxy + by^2-1.
    \end{aligned}
\end{equation}
Their union is described by the product $C\cdot E$. Given a general slowness polynomial $P_\bolda$ described by~\eqref{eq:general_slowness_polynomial_in_dimension_two} we compare coefficients to get an ideal
\begin{equation}
    G = \langle P_\bolda - C \cdot E \rangle.
\end{equation}
The ideal $G$ records the coefficient conditions for the factorization $P_\bolda=C E$. The tangency condition between an ellipse and a circle is encoded using the ideal
\begin{equation}
    T = \langle (a-r)\cdot(b-r)-c^2 \rangle.
\end{equation}
Since both curves are centered at the origin, tangency occurs precisely when the two associated quadratic forms share an eigenvalue, which gives the condition defining $T$. Then the ideal
\begin{equation}
    I = G + T = \langle P_\bolda - C \cdot E, (a-r)\cdot(b-r)-c^2 \rangle
\end{equation}
cuts out the slowness polynomials whose zero sets are unions of a circle and an ellipse tangent to one another. Eliminating $r,a,b,c$ gives the generators $Q_{i}(\mathbf{a})$ as listed in~\eqref{eq:ordinary tacnode_sing_polys-full}. These are precisely the polynomial relations among the $a_{ijkl}$ that are necessary and sufficient for the slowness polynomial to factor as a circle times an ellipse with tangency.
\end{proof}

\begin{proof}[Proof of Theorem~\ref{thm:main2}]
The \textsc{Magma} code needed for this proof is found in the supporting file \texttt{complexdimension\_singularities.magma}. We work over $\C$ in the polynomial ring
$
    R = \C[\bolda][x, y]
$. The slowness polynomial $P_{\bolda}$ as in~\eqref{eq:general_slowness_polynomial_in_dimension_two} and its partial derivatives $\partial_1 P_{\bolda}, \partial_2 P_{\bolda}$ generate the ideal
\begin{equation}
    I = \langle P_{\bolda}, \partial_1 P_{\bolda}, \partial_2 P_{\bolda} \rangle
    \subset R
    .
\end{equation}
Its vanishing set
$
    \V(I) \subset 
    \mathbb{A}^{6}_{\bolda}
    \times \mathbb{A}^{2}_{x, y}
$
is the incidence variety of singular points.

Eliminating the variables $x$ and $y$ gives the corresponding elimination ideal $eI\subset \C[\bolda]$,
which describes those stiffness tensors that have singular points in their slowness surfaces.
Let $S$ be the set of stiffness tensors $\bolda$ for which $\Sigma_\bolda$ has a singular point.
By elimination theory, the Zariski closure of $S$ is $\V(eI)$, and $S$ and $\V(eI)$ have the same dimension. A \textsc{Magma} computation gives Krull dimension $5$.

If the Hessian is invertible, then by the holomorphic Morse lemma the complex slowness surface is locally a union of two smooth curves.
To characterize the locus where the singularity is not ordinary node, we replace $I$ by
\begin{equation}
    J = \langle P_{\bolda}, \partial_1 P_{\bolda}, \partial_2 P_{\bolda}, H P_{\bolda} \rangle
    \subset R,
\end{equation}
where $H P_{\bolda}$ denotes the determinant of the Hessian of $P_\bolda$.
Repeating the same elimination and dimension computation shows that the set of stiffness tensors with ordinary tacnode singularities has dimension $4$.
\end{proof}

\subsection{Proofs of algebro-geometric lemmas}

\begin{proof}[Proof of Lemma~\ref{lma:f=0}]
    In the real case the geometric multiplicity of eigenvalues equals the algebraic multiplicity of the roots of the characteristic polynomial. The characteristic polynomial has a double root if and only if $f=0$.
\end{proof}

\begin{proof}[Proof of Lemma~\ref{lma:f>0}]
    By the spectral theorem the characteristic polynomial always has real roots. Hence $f\ge 0$.  Smoothness follows by observing that $f$ is a composition of smooth functions:
    \begin{equation}
        f=\mathrm{Tr}(\Gamma)^2(p(\theta))-4\mathrm{det}(\Gamma)(p(\theta)),
    \end{equation}
    where the function 
    \begin{equation}
        p:\theta\mapsto (\cos (\theta),\sin (\theta))
    \end{equation}
    is smooth. Combining gives $f(\theta)=0\implies f'(\theta)=0$.
\end{proof}

\begin{proof}[Proof of Lemma~\ref{lma:f''=0}]
Assume first that $f_\bolda(0) = f'_\bolda(0) = f''_\bolda(0) = 0$.
Since the polynomial conditions are very long, we use \textsc{Maxima} to find when these conditions hold; see Appendix~\ref{app:maxima}. The output of the \textsc{Maxima} computation yields:

\begin{enumerate}
\label{eq:conds-for-f-f-f-to-vanish}
\item
$a_{11}=r,a_{12}=s,a_{13}=t,a_{22}=-2s-r,a_{23}=-t+is+ir,a_{33}=2it+r$
\item
$a_{11}=r,a_{12}=s,a_{13}=t,a_{22}=-2s-r,a_{23}=-t-is-ir,a_{33}=r-2it$
\item
$a_{11}=r,a_{12}=s,a_{13}=0,a_{22}=t,a_{23}=is+ir,a_{33}=r$
\item
$a_{11}=r,a_{12}=s,a_{13}=0,a_{22}=t,a_{23}=-is-ir,a_{33}=r$
\item
$a_{11}=r,a_{12}=s,a_{13}=-is-ir,a_{22}=-2s-r,a_{23}=0,a_{33}=-2s-r$
\item
$a_{11}=r,a_{12}=s,a_{13}=is+ir,a_{22}=-2s-r,a_{23}=0,a_{33}=-2s-r$
\item
$a_{11}=r,a_{12}=-r,a_{13}=s,a_{22}=r,a_{23}=-s,a_{33}=r-2is$
\item
$a_{11}=r,a_{12}=-r,a_{13}=s,a_{22}=r,a_{23}=-s,a_{33}=r+2is$
\item
$a_{11}=r,a_{12}=-r,a_{13}=0,a_{22}=r,a_{23}=0,a_{33}=r$.
\end{enumerate}
Since the stiffness tensor is real, the cases above reduce to
\begin{enumerate}
\item
$a_{11}=r,a_{12}=-r,a_{13}=0,a_{22}=r,a_{23}=0,a_{33}=r$
\item
$a_{11}=r,a_{12}=-r,a_{13}=0,a_{22}=r,a_{23}=0,a_{33}=r$
\item
$a_{11}=r,a_{12}=-r,a_{13}=0,a_{22}=t,a_{23}=0,a_{33}=r$
\item
$a_{11}=r,a_{12}=-r,a_{13}=0,a_{22}=t,a_{23}=0,a_{33}=r$
\item
$a_{11}=r,a_{12}=-r,a_{13}=0,a_{22}=r,a_{23}=0,a_{33}=r$
\item
$a_{11}=r,a_{12}=-r,a_{13}=0,a_{22}=r,a_{23}=0,a_{33}=r$
\item
$a_{11}=r,a_{12}=-r,a_{13}=0,a_{22}=r,a_{23}=0,a_{33}=r$
\item
$a_{11}=r,a_{12}=-r,a_{13}=0,a_{22}=r,a_{23}=0,a_{33}=r$
\item
$a_{11}=r,a_{12}=-r,a_{13}=0,a_{22}=r,a_{23}=0,a_{33}=r$.
\end{enumerate}
The only conditions in the list above producing a positive definite stiffness tensor are items (3) and (4) in the case that $t > r$. Hence~\eqref{eq:ellipse-tensor} must hold.

Then suppose that the stiffness tensor is given by~\eqref{eq:ellipse-tensor}. Then the stiffness tensor is type (3) for $s = -r$ in~\eqref{eq:conds-for-f-f-f-to-vanish}. Hence $a$ satisfies $f_\bolda(0) = f_\bolda'(0) = f''_\bolda(0) = 0$, proving the equivalence.
\end{proof}

\begin{proof}[Proof of Lemma~\ref{lma:f''>0}]
    Suppose $f''(0)\neq 0$. The determinant of the Hessian of the slowness polynomial at $\theta=0$ is given by $-2f''(0)$. Since $f\ge 0$ and $f(0)=f'(0)=0$, we must have $f''(0)>0$. Hence the determinant is negative, giving us in two dimensions that the Hessian has one negative and one positive eigenvalue. By the Morse lemma this implies that there is a local diffeomorphism $\phi$ fixing $0$ such that
    \begin{equation}
        (P_c\circ\phi^{-1})(z)=z_1^2-z_2^2.\qedhere
    \end{equation}
\end{proof}

\begin{proof}[Proof of Lemma~\ref{lem:complex_circleellipse_no_real}]

The system obtained by comparing the coefficients of a slowness polynomial with the product of a centered circle and a centered ellipse, together with the circle--ellipse tangency condition, is
\begin{equation}
\label{eq:tangency_generators_system}
\begin{aligned}
- r a + a_{11} a_{33} - a_{13}^2 &= 0, \\
- 2 r c + 2 a_{11} a_{23} - 2 a_{12} a_{13} &= 0, \\
- r a - r b + a_{11} a_{22} - a_{12}^2 - 2 a_{12} a_{33} + 2 a_{13} a_{23} &= 0, \\
r + a - a_{11} - a_{33} &= 0, \\
- 2 r c - 2 a_{12} a_{23} + 2 a_{13} a_{22} &= 0, \\
2 c - 2 a_{13} - 2 a_{23} &= 0, \\
- r b + a_{22} a_{33} - a_{23}^2 &= 0, \\
r + b - a_{22} - a_{33} &= 0, \\
r^2 - r a - r b + a b - c^2 &= 0.
\end{aligned}
\end{equation}
Assume that $(r,a,b,c,a_{11},\ldots,a_{33})\in\C^4\times\R^6$ solves~\eqref{eq:tangency_generators_system}. We show that the parameters $r,a,b,c$ are necessarily real. The equation
\begin{equation}
    2 c - 2 a_{13} - 2 a_{23} = 0 \iff c = a_{13} + a_{23}
\end{equation}
shows that $c$ is real. If $c\neq0$, then the equation
\begin{equation}
    - 2 r c - 2 a_{12} a_{23} + 2 a_{13} a_{22} = 0 \iff r = \frac{a_{13} a_{22} - a_{12} a_{23}}{c}
\end{equation}
implies that $r$ is real. Since $r$ is real, the equation
\begin{equation}
    r + a - a_{11} - a_{33} = 0 \iff a = a_{11} + a_{33} - r
\end{equation}
implies that $a$ is real. Similarly,
\begin{equation}
    r + b - a_{22} - a_{33} = 0 \iff b = a_{22} + a_{33} - r
\end{equation}
implies that $b$ is real. This proves the claim when $c\neq0$. It remains to consider the case $c=0$. Then~\eqref{eq:tangency_generators_system} reduces to
\begin{equation}
\label{eq:tangency_generators_system_reduced}
\begin{aligned}
- r a + a_{11} a_{33} - a_{13}^2 &= 0, \\
a_{11} a_{23} - a_{12} a_{13} &= 0, \\
- r a - r b + a_{11} a_{22} - a_{12}^2 - 2 a_{12} a_{33} + 2 a_{13} a_{23} &= 0, \\
r + a - a_{11} - a_{33} &= 0, \\
-a_{12} a_{23} + a_{13} a_{22} &= 0, \\
a_{13} + a_{23} &= 0, \\
- r b + a_{22} a_{33} - a_{23}^2 &= 0, \\
r + b - a_{22} - a_{33} &= 0, \\
r^2 - r a - r b + a b &= 0.
\end{aligned}
\end{equation}
The last equation factors as
\begin{equation}
    r^2 - r a - r b + a b = 0 \iff (r - a)(r - b) = 0,
\end{equation}
Thus either $r=a$ or $r=b$. Suppose first that $r=a$. Then
\begin{equation}
    r + a - a_{11} - a_{33} = 0 \iff 2r = a_{11} + a_{33} \iff r = \frac{a_{11} + a_{33}}{2}
\end{equation}
shows that $r$ is real, and hence $a=r$ is real. Moreover,
\begin{equation}
    r + b - a_{22} - a_{33} = 0 \iff b = a_{22} + a_{33} - r
\end{equation}
shows that $b$ is real. Therefore $r,a,b$ are real and $c=0$. The case $r=b$ is analogous. Thus every solution in $\C^4\times\R^6$ has $r,a,b,c\in\R$, as claimed.
\end{proof}

\section{Analytic proofs}
\label{sec:analytic_proofs}

We move on to the analytic implications of transverse type singularities of slowness surfaces. 

\subsection{Lemmas}

Our proof of the microlocal diagonalizability result relies on the following set of lemmas, proven at the end of this section.

In all the lemmas of this subsection we assume that $\rho(x)$ and $\boldc(x)$ are $C^\infty$ and they satisfy the positivity and symmetry conditions and that the slowness surfaces are ordinary node in an open set $\bundleset\subset T^*\Omega\setminus0$.
As the final claims are all local, we assume for convenience that $\bundleset=\baseset\times\fiberset\subset\R^2\times\R^2$ with convex open $\baseset,\fiberset\subset\R^2$ without loss of generality for the theorems stated above.

Let us denote by $\singularset\subset\bundleset$ the set where $\Gamma$ has a double eigenvalue.
For any function $f$ defined on $\bundleset$ we call the functions $\xi\mapsto f(x,\xi)$ for fixed $x$ the vertical slices of $f$.
The domain is $\bundleset\cap T^*_x\Omega$.

\begin{lemma}
\label{lma:smooth-singular-locus-on-bundle}
The set $\singularset\subset T^*\Omega$ is a smooth hypersurface and there is a smooth non-vanishing one-form $\alpha(x)$ defined on $\baseset$ so that
$
\singularset
=
\{
(x,\xi)
\in\bundleset
;
\xi
\in
\R\alpha(x)
\}
$
.
\end{lemma}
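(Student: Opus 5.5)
We work fiberwise first and then assemble the fibers smoothly in $x$. Since $\Gamma_{\bolda(x)}(\xi)$ is quadratic in $\xi$, a double eigenvalue at $\xi$ is the same as a double eigenvalue at every nonzero multiple of $\xi$, so the fiber of $\singularset$ over $x$ is a union of lines through the origin intersected with $\fiberset$. In dimension two, $\Gamma$ has a double eigenvalue exactly when it is a multiple of the identity. Writing $\Gamma=\begin{pmatrix}g_{11}&g_{12}\\ g_{12}&g_{22}\end{pmatrix}$, the condition is therefore
\begin{equation*}
g_{11}-g_{22}=0,\qquad g_{12}=0 .
\end{equation*}
Both are real quadratic forms in $\xi$ whose coefficients depend smoothly on $x$. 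Call them $q_1(x,\xi)$ and $q_2(x,\xi)$. The discriminant is $f=(g_{11}-g_{22})^2+4g_{12}^2$, so $\singularset=\{q_1=q_2=0\}$.

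The first step uses the hypothesis that every singularity is an ordinary node. By Lemma~\ref{lma:f''>0} and its converse from the proof of Theorem~\ref{thm:classification-v2}, every zero $\theta$ of $f_{\bolda(x)}$ on $\bundleset$ has $f''_{\bolda(x)}(\theta)>0$. Lemma~\ref{lma:f''=0} excludes the tacnode form~\eqref{eq:ellipse-tensor}, so along each direction in $\singularset$ the restriction of $f(x,\cdot)$ to the unit circle has a nondegenerate zero. Since $f=q_1^2+4q_2^2$, a nondegenerate zero of $f|_{S^1}$ at $\theta_0$ means the vector $(q_1,2q_2)|_{S^1}$ vanishes at $\theta_0$ with nonzero derivative. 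Equivalently, one of the binary quadratics $q_1(x,\cdot)$ or $q_2(x,\cdot)$ has $\theta_0$ as a simple root. We then apply the implicit function theorem to $F(x,\theta)=q_j(x,\cos\theta,\sin\theta)$, with $j$ chosen so that $\partial_\theta q_j\neq0$. This gives a smooth function $\theta(x)$ near $x_0$ such that $F(x,\theta(x))=0$.

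The main obstacle is showing that the whole zero set is exactly this curve $\theta(x)$, that is, that the double-eigenvalue direction persists for nearby $x$ rather than disappearing. A zero of one quadratic is not enough; both $q_1$ and $q_2$ must vanish. Here we use the standing assumption that $\singularset\cap\bundleset$ is nonempty over every $x\in\baseset$. This is implicit in the statement, since it asserts that $\singularset$ is given by the lines $\R\alpha(x)$, and if needed we shrink $\baseset$ and use the structure from~\cite{IKK2025}, namely $R(\bolda(x))=0$ and $D>0$. The zeros of $f(x,\cdot)$ lying in the small angular window of $\fiberset$ are contained in the zero set of $q_j$, which is the single smooth curve $\theta(x)$. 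If the set were empty for some $x$, that would contradict the assumption. The fallback approach is to note that $f\ge0$ with nondegenerate minimum value $0$, and apply the Morse–Bott lemma to $\min_\theta f(x,\theta)$. The zero set of a nonnegative function whose zeros are all transversally nondegenerate is a smooth submanifold of codimension one in $(x,\theta)$-space, which is again the graph $\theta=\theta(x)$.

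Finally we set $\alpha(x)=(\cos\theta(x),\sin\theta(x))$, which is smooth and nonvanishing. By the homogeneity noted in the first paragraph, $\singularset=\{(x,\xi)\in\bundleset:\xi\in\R\alpha(x)\}$. Smoothness as a hypersurface in $\bundleset\subset\R^2\times\R^2$ follows by writing $\singularset$ as the zero set of $\xi\mapsto\alpha(x)^\perp\cdot\xi$. This function is linear in $\xi$ and its $\xi$-gradient $\alpha(x)^\perp$ never vanishes, so $\singularset$ is a smooth hypersurface of codimension one.
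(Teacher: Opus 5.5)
Your route differs from the paper's. The paper applies the implicit function theorem to $\der_\xi P_{\bolda(x)}=0$, using invertibility of $\mathrm{Hess}_\xi P$ at $(x_0,\xi_0)$, to get a smooth curve $\alpha(x)$ of critical points of $P$. It then asserts, via \cite{IKK2025} and homogeneity, that critical points of $P$ rescale to double-eigenvalue points. You instead work with the defining equations $q_1=q_2=0$ of $\singularset$. You turn the node hypothesis into $f''>0$; that implication is really the Hessian--$f''$ identity in the proof of Lemma~\ref{lma:f''>0}, not a converse taken from Theorem~\ref{thm:classification-v2}. This gives a simple root of one $q_j$ on the circle, and you apply the implicit function theorem to that $q_j$. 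Your route proves rigorously that $\singularset\cap\bundleset$ lies in the graph of $\theta(x)$. It also isolates exactly what remains: that $f(x,\theta(x))=0$ for every $x$.

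The paper needs the same fact and passes over it. At a critical point $\xi\neq0$ of $P_{\bolda(x)}$, Euler's identity gives $\tr\Gamma=2\det\Gamma$, hence $P=-f/(4\det\Gamma)$. So the critical curve lies on $\Sigma_{\bolda(x)}$ only where $f=0$; elsewhere it is a saddle of $P$ with $P<0$.

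Your persistence assumption is therefore a genuinely necessary hypothesis. It is not implicit in or derivable from the statement; inferring it from the conclusion is circular. State it as an added hypothesis.

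Your Morse--Bott fallback is false. Zeros that are nondegenerate in $\theta$ force the zero set into the graph of $\theta(x)$, but the zero set can be any closed subset of that graph; think of $\theta^2+x_1^2$. The Morse--Bott lemma presupposes a critical manifold rather than producing one.

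Here is a concrete counterexample. Take constant density and $a_{1111}=a_{1212}=1$, $a_{2222}=1+x_1$, $a_{1122}=a_{1112}=a_{2212}=0$. Then $q_1=-x_1\xi_2^2$ and $q_2=\xi_1\xi_2$, and every singular point is an ordinary node. Also $R(\bolda(x))=0$ for all $x$, because the $\xi_1$-axis is always singular. Yet near $x_0=0$, $\xi_0=(0,1)$ one gets $\singularset\cap\bundleset=\{x_1=0,\ \xi_1=0\}$. This set has codimension two, so it is neither a hypersurface nor of the form $\{\xi\in\R\alpha(x)\}$.

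So the global condition $R(\bolda(x))=0$, $D>0$ that you mention does not suffice either. The right hypothesis is your local one: a double-eigenvalue direction in the angular window of $\fiberset$ over every $x\in\baseset$. With that made explicit and the fallback deleted, your argument is complete, including the hypersurface claim via $\alpha(x)^\perp\cdot\xi$.
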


\begin{lemma}
\label{lma:global-existence+vertical-analyticity}
For $i=1,2$ there are functions
$\lambda_i
\colon\bundleset\to(0,\infty)$
and
$q_i
\colon\bundleset\to \sphere{1}\subset\R^2$
so that
(a)
$\Gamma_{\boldc(x)}(\xi)q_i(x,\xi)=\lambda_i(x,\xi)q_i(x,\xi)$,
(b)
$\lambda_1\neq\lambda_2$ on $\bundleset\setminus\singularset$,
and
(c)
the vertical slices of $\lambda_i$ and $q_i$ are real analytic.
\end{lemma}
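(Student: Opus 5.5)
The plan is to fix $x\in\baseset$, construct the eigen-branches on the vertical slice $\fiberset$ using Rellich's theorem on analytic perturbation of symmetric matrices, and then check (a)–(c). Nothing is claimed about smoothness in $x$: the statement only asks for vertical analyticity, and joint regularity is presumably handled later.

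\emph{Step 1: reduce to one variable.} The Christoffel matrix $\Gamma_{\boldc(x)}(\xi)$ is a real symmetric matrix whose entries are homogeneous quadratic polynomials in $\xi$. Write $\xi=s(\cos\theta,\sin\theta)$ with $s>0$. Then
\begin{equation*}
\Gamma_{\boldc(x)}(\xi)=s^2\,\Gamma_{\boldc(x)}(\cos\theta,\sin\theta),
\end{equation*}
so eigenvalues scale by $s^2$ and eigenvectors depend only on $\theta$. I will arrange that $\fiberset$ does not contain the origin, which is allowed since $\bundleset\subset T^*\Omega\setminus0$ and we may shrink. Since $\fiberset$ is convex and misses $0$, the set of angles $\theta$ with $(\cos\theta,\sin\theta)\in\R_+\fiberset$ is an open interval $J$ of length at most $\pi$. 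On $\fiberset$ the map $\xi\mapsto(s,\theta)\in(0,\infty)\times J$ is a real-analytic diffeomorphism onto its image.

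\emph{Step 2: apply Rellich.} The map $\theta\mapsto M_x(\theta):=\Gamma_{\boldc(x)}(\cos\theta,\sin\theta)$ is a real-analytic family of real symmetric $2\times2$ matrices on the interval $J$. Rellich's theorem gives real-analytic functions $\mu_1,\mu_2\colon J\to\R$ and real-analytic $v_1,v_2\colon J\to\sphere{1}$ with $v_1(\theta)\perp v_2(\theta)$ and $M_x(\theta)v_i(\theta)=\mu_i(\theta)v_i(\theta)$. I then set
\begin{equation*}
\lambda_i(x,\xi)=s^2\mu_i(\theta), \qquad q_i(x,\xi)=v_i(\theta).
\end{equation*}
These satisfy (a), and their vertical slices are real analytic, which is (c).

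Positivity of $\lambda_i$ comes from positive definiteness. For $\xi\neq0$ and $v\neq0$,
\begin{equation*}
v^T\Gamma(\xi)v=\sum a_{ijkl}(v_i\xi_j)(v_k\xi_l)>0,
\end{equation*}
because the symmetrization of $v\otimes\xi$ is a nonzero symmetric matrix. So $\Gamma(\xi)$ is positive definite and both $\mu_i>0$. (The Christoffel matrix uses the index pattern $a_{iklj}$, and the symmetries of $\bolda$ make this reindexing harmless.)

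\emph{Step 3: condition (b).} On the slice, $\lambda_1=\lambda_2$ exactly where the discriminant of $M_x(\theta)$ vanishes. Up to the factor $s^4$, this discriminant is $f$ evaluated at direction $\theta$. By Lemma~\ref{lma:smooth-singular-locus-on-bundle}, $\singularset\cap(\{x\}\times\fiberset)=\fiberset\cap\R\alpha(x)$. This is a single ray direction in $J$, or empty. Hence the analytic function $\mu_1-\mu_2$ is not identically zero on $J$, and its zero set consists of at most the one angle determined by $\alpha(x)$. Therefore $\lambda_1\neq\lambda_2$ on $\bundleset\setminus\singularset$.

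\emph{Main obstacle.} The main point of care is the labeling. Ordering the eigenvalues by size would fail to be analytic (indeed not even $C^1$) across the ordinary-node crossing, since there $f''>0$ by Lemma~\ref{lma:f''>0} and the branches genuinely cross. This is why the branches must be taken from Rellich's theorem rather than by ordering. The second delicate point is that Rellich applies to one-parameter families, so the homogeneity reduction of Step~1 is essential. That reduction in turn relies on $\fiberset$ being convex and avoiding the origin, so that $J$ is a single interval and the analytic branches are globally well defined on the slice without monodromy.
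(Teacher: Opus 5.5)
Your proof is correct and takes the same route as the paper: reduce to an angular interval, apply Rellich's theorem to $\theta\mapsto\Gamma_{\boldc(x)}(\cos\theta,\sin\theta)$, and extend by homogeneity. You also verify positivity of the $\lambda_i$ and property (b), which the paper leaves implicit. Property (b) in fact follows directly from the definition of $\singularset$, because the Rellich branches, having orthonormal eigenvectors, exhaust the spectrum.
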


\begin{lemma}
\label{lma:non-degenerate-smoothness}
The functions $\lambda_i$ and $q_i$ are $C^\infty$ on $\bundleset\setminus\singularset$.
\end{lemma}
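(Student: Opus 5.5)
On $\bundleset\setminus\singularset$ the two eigenvalues of the real symmetric $2\times2$ matrix $\Gamma_{\boldc(x)}(\xi)$ are distinct, by Lemma~\ref{lma:global-existence+vertical-analyticity}(b). The proof reduces to the standard fact that simple eigenvalues and their normalized eigenvectors depend smoothly on the entries of a smooth symmetric matrix family. The entries of $\Gamma_{\boldc(x)}(\xi)$ are $\sum_{k,l}c_{iklj}(x)\xi_k\xi_l$, so they are $C^\infty$ in $(x,\xi)$ because $\boldc$ is $C^\infty$.

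\textbf{Eigenvalues.} I will write
\begin{equation}
\lambda_{\pm}=\tfrac12\bigl(\tr\Gamma\pm\sqrt{(\tr\Gamma)^2-4\det\Gamma}\bigr).
\end{equation}
The discriminant $(\tr\Gamma)^2-4\det\Gamma$ is a smooth function of $(x,\xi)$, and it is strictly positive on $\bundleset\setminus\singularset$. The square root is smooth on $(0,\infty)$, so $\lambda_\pm$ are $C^\infty$ on this open set.

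The labelling $\lambda_1,\lambda_2$ from Lemma~\ref{lma:global-existence+vertical-analyticity} need not coincide globally with $\lambda_+,\lambda_-$. It may swap across $\singularset$, since it is chosen analytically along vertical slices. However, $\lambda_1$ is continuous on every vertical slice, and on each connected component of a slice minus $\singularset$ it takes values in $\{\lambda_+,\lambda_-\}$, which never meet there. So on such a component $\lambda_1$ equals $\lambda_+$ identically or $\lambda_-$ identically.

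This is the main obstacle: I must show that the choice is also locally constant in $x$, not merely along slices. I would first check the construction in Lemma~\ref{lma:global-existence+vertical-analyticity}. If the functions were defined by analytic continuation across the line $\R\alpha(x)$ from Lemma~\ref{lma:smooth-singular-locus-on-bundle}, then the labelling is determined by a rule such as ``$\lambda_1=\lambda_+$ on the side $\langle\xi,\alpha(x)^\perp\rangle>0$'' that depends continuously on $x$. Near any point of $\bundleset\setminus\singularset$, this gives $\lambda_1=\lambda_+$ or $\lambda_1=\lambda_-$ on a whole neighbourhood. If that rule is not available directly, I would instead argue as follows. Near a point off $\singularset$ the gap $\lambda_+-\lambda_-$ is bounded below by some $\delta>0$. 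The function $\lambda_1-\lambda_+$ takes only the values $0$ and $-(\lambda_+-\lambda_-)$, so to prevent it from jumping between them I would need joint continuity of $\lambda_1$, which I would extract from the construction of the lemma. Either way, $\lambda_i$ is locally one of $\lambda_\pm$ and hence smooth.

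\textbf{Eigenvectors.} The spectral projector
\begin{equation}
\Pi_i=\frac{\Gamma-\lambda_j I}{\lambda_i-\lambda_j},\qquad j\neq i,
\end{equation}
is smooth on $\bundleset\setminus\singularset$. Since $q_i$ is a unit eigenvector, $q_iq_i^T=\Pi_i$. Near a given point, some column $v$ of $\Pi_i$ is nonzero, so locally $q_i=\pm v/|v|$, and $v/|v|$ is smooth. The sign is again locally constant provided $q_i$ is continuous, and I would get this by the same continuity argument as for the labelling of the eigenvalues. Hence $q_i$ is smooth on $\bundleset\setminus\singularset$.
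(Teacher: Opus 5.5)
Your local argument is sound, but it follows a different route from the paper's. The paper applies the implicit function theorem to $\Phi(x,\xi,q,\lambda)=((\Gamma-\lambda I)q,(\abs{q}^2-1)/2)$. At a zero with $\abs{q_0}=1$ the bordered Jacobian in $(q,\lambda)$ has determinant $\tr(\Gamma-\lambda_0 I)$, which is the gap to the other eigenvalue and so is nonzero off $\singularset$. This gives eigenvalue and eigenvector smoothly at the same time, without explicit formulas, and the idea extends directly to simple eigenvalues of $n\times n$ symmetric matrices. You instead use the closed-form $2\times2$ objects: the discriminant $(\tr\Gamma)^2-4\det\Gamma$ and the projector $(\Gamma-\lambda_jI)/(\lambda_i-\lambda_j)$. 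These are exactly what the paper later uses in the proof of Theorem~\ref{thm:ordinary node->diagonalizable}, so your version fits well with that proof and makes the smooth dependence transparent.

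The point you call the main obstacle is genuine, and the paper's proof contains the same hidden step. The implicit function theorem identifies the given pair $(q_i,\lambda_i)$ with its smooth local solution only if that pair stays near $(q_0,\lambda_0)$, i.e.\ is continuous at the base point, and the paper never checks this. However, your plan to extract joint continuity from the construction of Lemma~\ref{lma:global-existence+vertical-analyticity} cannot work. That lemma is Rellich's theorem applied slice by slice. Properties (a)--(c) survive if, independently for each $x$, one swaps the labels $1\leftrightarrow2$, or replaces $q_i(x,\xi)$ by $\sigma(x)q_i(x,\xi)$ with an arbitrary $\sigma\colon\baseset\to\{\pm1\}$. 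So continuity in $x$ must be imposed as a normalization; it does not follow from (a)--(c).

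For the eigenvalues, your side rule does this normalization consistently with (c). By Lemma~\ref{lma:ordinary node-fiber} the analytic branches cross transversally, so $\lambda_1-\lambda_2$ changes sign across $\R\alpha(x)$. For the eigenvectors, you may fix the sign on one side of $\R\alpha(x)$, say by matching a smooth unit generator of the projector there. On the other side the sign is then forced by analytic continuation through the crossing. Its local constancy in $x$ is really part of the crossing analysis in the proof of Theorem~\ref{thm:ordinary node->diagonalizable}, namely the smoothness of $q_1q_1^T$ across $\singularset$. Without such normalizations, what both you and the paper actually prove is weaker: near each point of $\bundleset\setminus\singularset$ the eigenvalues and unit eigenvectors \emph{can be chosen} smoothly.
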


\begin{lemma}
\label{lma:Lipschitz}
The eigenvalues $\lambda_i$ are Lipschitz on $\bundleset$.
\end{lemma}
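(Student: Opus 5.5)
The claim is that the eigenvalues $\lambda_i$ of Lemma~\ref{lma:global-existence+vertical-analyticity} are locally Lipschitz on $\bundleset$. We avoid their particular labelling by comparing with the ordered eigenvalues $\mu_1\le\mu_2$ of the symmetric matrix $\Gamma(x,\xi):=\rho(x)^{-1}\Gamma_{\boldc(x)}(\xi)$.

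First, Weyl's inequality gives a Lipschitz bound for the ordered eigenvalues. For real symmetric $2\times2$ matrices $A,B$ we have $|\mu_j(A)-\mu_j(B)|\le\|A-B\|_{\mathrm{op}}$. The map $(x,\xi)\mapsto\Gamma(x,\xi)$ is $C^\infty$, since $\rho$ and $\boldc$ are smooth and $\Gamma$ is polynomial in $\xi$. Because $\bundleset=\baseset\times\fiberset$ is convex, the mean value theorem makes $\Gamma$ Lipschitz on compact convex subsets. It follows that $\mu_1$ and $\mu_2$ are locally Lipschitz on $\bundleset$. Concretely, $\mu_{1,2}=\tfrac12(\tr\Gamma\mp\sqrt{f})$, where $f=(\tr\Gamma)^2-4\det\Gamma\ge0$ is the discriminant. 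This expression makes the same conclusion visible directly: $\sqrt{f}$ equals twice the norm of the traceless part of $\Gamma$, and that norm is Lipschitz.

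Second, we relate $\lambda_i$ to $\mu_j$. At each point, $\{\lambda_1,\lambda_2\}=\{\mu_1,\mu_2\}$, so for $i=1,2$
\begin{equation*}
|\lambda_i-\tfrac12\tr\Gamma|=\tfrac12\sqrt f .
\end{equation*}
Write $\lambda_i=\tfrac12\tr\Gamma+\tfrac12\sigma_i\sqrt f$ with $\sigma_i\in\{\pm1\}$. Off $\singularset$ we have $f>0$. By Lemma~\ref{lma:non-degenerate-smoothness}, $\lambda_i$ is continuous there, so $\sigma_i$ is locally constant on $\bundleset\setminus\singularset$.

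The main obstacle is that $\sigma_i$ can flip across $\singularset$. This is in fact the expected behaviour at a node: the analytic branches cross there, so the ordered eigenvalues are not the analytic ones. Lipschitz continuity must survive the flip. The plan is to use that $\sqrt f$ vanishes on $\singularset$, and to control $|\lambda_i(p)-\lambda_i(q)|$ by joining $p$ and $q$ with a segment in the convex set $\bundleset$.

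Along the segment $\gamma(s)=(1-s)p+sq$, consider the closed set $Z=\{s:\gamma(s)\in\singularset\}$. On each complementary interval $\sigma_i$ is constant, so there the increment of $\lambda_i$ is bounded by $L|\Delta s|\,|p-q|$, where $L$ is a Lipschitz constant of $\tfrac12\tr\Gamma$ plus one of $\tfrac12\sqrt f$. If $Z$ is nonempty, let $s_0=\min Z$ and $s_1=\max Z$. Between $s_0$ and $s_1$ the function $\lambda_i$ need not be tracked at all. At the endpoints we have $\lambda_i=\tfrac12\tr\Gamma$ because $f=0$ there, and continuity of $\lambda_i$ (it is squeezed between $\mu_1$ and $\mu_2$) identifies these values. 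Hence
\begin{equation*}
|\lambda_i(\gamma(s_1))-\lambda_i(\gamma(s_0))|=\tfrac12|\tr\Gamma(\gamma(s_1))-\tr\Gamma(\gamma(s_0))|\le L|s_1-s_0|\,|p-q|.
\end{equation*}
Summing the three pieces gives $|\lambda_i(p)-\lambda_i(q)|\le 2L|p-q|$, or a similar constant.

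Actually, the same estimate follows more simply from the pointwise inequality
\begin{equation*}
|\lambda_i(p)-\lambda_i(q)|\le\tfrac12|\tr\Gamma(p)-\tr\Gamma(q)|+\tfrac12\bigl(\sqrt{f(p)}+\sqrt{f(q)}\bigr)
\end{equation*}
applied at the switch points only, so the argument never needs the fine structure of $\singularset$ from Lemma~\ref{lma:smooth-singular-locus-on-bundle}. Shrinking to compact convex neighbourhoods then yields the local Lipschitz statement, which suffices since all later claims are local.
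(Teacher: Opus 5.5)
Your proof is correct, but it takes a different route from the paper's.

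\emph{The paper's route.} It differentiates: where an eigenvalue is simple, $\partial\lambda_i=q_i^T(\partial\Gamma)q_i$ is bounded by the derivatives of $\Gamma$, and Lipschitz continuity is concluded from this. That formula is dimension-free and gives the constant at once. However, it only controls $\lambda_i$ on $\bundleset\setminus\singularset$. The passage across $\singularset$, where the analytic labels may switch relative to the ordered eigenvalues, is left implicit. A gradient bound off a closed set does not by itself give a Lipschitz bound; one also needs continuity at $\singularset$ and some control of how segments meet it.

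\emph{Your route.} You never differentiate eigenvalues. Weyl's inequality, or the identity $\sqrt f=2\|\Gamma-\tfrac12(\tr\Gamma)I\|_{\mathrm{op}}$, makes $\tfrac12\tr\Gamma$ and $\sqrt f$ Lipschitz. Lemma~\ref{lma:non-degenerate-smoothness} makes $\sigma_i$ locally constant off $\singularset$. Cutting the segment at $\min Z$ and $\max Z$ then handles the crossing exactly, because on $\singularset$ both branches equal the Lipschitz function $\tfrac12\tr\Gamma$.

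\emph{What each buys.} Your argument supplies precisely the step the paper passes over. It needs neither Lemma~\ref{lma:smooth-singular-locus-on-bundle} nor the node hypothesis. Its cost is reliance on the $2\times2$ identity $\abs{\lambda_i-\tfrac12\tr\Gamma}=\tfrac12\sqrt f$, which is all that is needed here.

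\emph{Your closing remark.} It is a complete and shorter proof once you add the missing observation. If $\sigma_i(p)=\sigma_i(q)$, the bound is immediate. If not, the segment must meet $\singularset$ at some point $r$, since otherwise $\sigma_i$ would be constant along it. Then
\[
\tfrac12\bigl(\sqrt{f(p)}+\sqrt{f(q)}\bigr)\le L'\bigl(\abs{p-r}+\abs{r-q}\bigr)=L'\abs{p-q},
\]
where $L'$ is a Lipschitz constant of $\tfrac12\sqrt f$.
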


\begin{lemma}
\label{lma:ordinary node-fiber}
For $(x,\xi)\in\singularset$ the $\xi$-differential $\der_\xi(\lambda_1-\lambda_2)$ is non-zero.
\end{lemma}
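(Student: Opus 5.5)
Fix $(x,\xi)\in\singularset$. By Lemma~\ref{lma:global-existence+vertical-analyticity}, the vertical slices of $\lambda_1,\lambda_2$ are real analytic, so the difference $g=\lambda_1-\lambda_2$ is real analytic in $\xi$ near $\xi$. Since $d_\xi g$ is a single well-defined covector on that slice, it suffices to show $g$ vanishes to first order only. I will reduce this to the nonvanishing of $f''$ from the algebraic section.

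\textbf{Step 1 (two directions).} Since $\Gamma_{\bolda(x)}$ is quadratic in $\xi$, both eigenvalues are $2$-homogeneous, so $g(t\xi)=t^2g(\xi)$ for $t>0$. Along the ray $\R\xi\subset\singularset$ (Lemma~\ref{lma:smooth-singular-locus-on-bundle}), $g$ vanishes identically. The radial derivative $\xi\cdot\nabla_\xi g=2g(\xi)$ is therefore $0$, and $d_\xi g\neq0$ is equivalent to the angular derivative being nonzero. Write $\xi=|\xi|(\cos\theta_0,\sin\theta_0)$ and $h(\theta)=g(\cos\theta,\sin\theta)$. By homogeneity, $h'(\theta_0)\neq0$ is what must be shown.

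\textbf{Step 2 (relation to $f$).} The discriminant of the characteristic polynomial of a symmetric $2\times2$ matrix is $(\tr)^2-4\det=(\lambda_1-\lambda_2)^2$. Hence $f_{\bolda(x)}(\theta)=h(\theta)^2$, and both sides are analytic near $\theta_0$ (the labelling of the branches does not matter after squaring). With $h(\theta_0)=0$, this gives $f'(\theta_0)=0$ and $f''(\theta_0)=2h'(\theta_0)^2$. So $h'(\theta_0)\neq0$ if and only if $f''_{\bolda(x)}(\theta_0)\neq0$.

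\textbf{Step 3 (use the ordinary node hypothesis).} Rotate coordinates so that $\theta_0=0$. The ray meets the slowness surface $\Sigma_{\bolda(x)}$ at the point where $\lambda_1=\lambda_2=1$, which exists by positivity and homogeneity, and by Lemma~\ref{lma:f=0} this point is singular. Suppose $f''(0)=0$. Then Lemma~\ref{lma:f''=0} forces $\bolda(x)$ to have the form~\eqref{eq:ellipse-tensor}, so $\Sigma_{\bolda(x)}$ is the tangent circle and ellipse of~\eqref{eq:ellipse-polynomial}. The singular point is then an ordinary tacnode, not an ordinary node, which contradicts the hypothesis on $\bundleset$. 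Hence $f''(0)\neq0$, and by Step 2 we get $h'(0)\neq0$, so $d_\xi(\lambda_1-\lambda_2)\neq0$.

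\textbf{Main obstacle.} The delicate point is Step 2, specifically justifying that $h$ is analytic through the crossing. The continuous ordered eigenvalues $\lambda_{\max}-\lambda_{\min}=|h|$ are not differentiable there. I therefore rely on Lemma~\ref{lma:global-existence+vertical-analyticity}(c), which supplies Rellich-type analytic branches, and I should check that its $\lambda_i$ are exactly these branches and not the sorted eigenvalues. A second point to verify is the identification of the normalization in Lemma~\ref{lma:f''=0} (via $\bolda=\rho^{-1}\boldc$) with the tensor at the fibre $x$. This is immediate after rescaling, since $f$ and $h$ both scale by positive constants.
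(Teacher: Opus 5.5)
Your proof is correct, but it takes a different route from the paper's.

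\textbf{The paper's route.} It stays on the slowness surface and uses the ordinary node hypothesis positively. With the analytic branches, $P_\bolda=(\lambda_1-1)(\lambda_2-1)$. At a singular point, where $\lambda_1=\lambda_2=1$, the product rule gives
\[
D^2P_\bolda=\der\lambda_1\otimes\der\lambda_2+\der\lambda_2\otimes\der\lambda_1 .
\]
Reading ``ordinary node'' as ``this Hessian has rank $2$'' forces $\der\lambda_1$ and $\der\lambda_2$ to be linearly independent, so $\der(\lambda_1-\lambda_2)\neq0$. This takes three lines. It needs neither the rotation to $\theta_0=0$ nor the computer-assisted classification of Lemma~\ref{lma:f''=0}.

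\textbf{Your route.} You use Euler's relation and $f=h^2$ to reduce the claim to $f''(\theta_0)=2h'(\theta_0)^2\neq0$. You then exclude $f''(\theta_0)=0$ by invoking Lemma~\ref{lma:f''=0}: if it held, the fibre would be a tangent circle--ellipse pair, whose contact point has a rank-one Hessian. So you use the hypothesis only negatively (``this singularity is not a tacnode''). That is legitimate, and your points about Rellich branches and rescaling to $\lambda_1=\lambda_2=1$ are handled correctly. The cost is that this analytic lemma now depends on the \textsc{Maxima} computation. The gain is an explicit quantitative link to the discriminant of the algebraic section.

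\textbf{How to drop the computer algebra.} Lemma~\ref{lma:f''=0} is not actually needed in your argument. From the same factorisation,
\[
\det D^2P_\bolda(\xi_0)=-\det(\der\lambda_1,\der\lambda_2)^2 .
\]
Euler's relation gives both $\der\lambda_i$ the same radial component $2/\abs{\xi_0}$. Hence this determinant equals $-4h'(\theta_0)^2=-2f''(\theta_0)$, which is exactly the identity quoted in the proof of Lemma~\ref{lma:f''>0}. Non-degeneracy of the Hessian therefore gives $h'(\theta_0)\neq0$ directly, and that is precisely the paper's argument.
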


\begin{lemma}
\label{lma:Hadamard}
Let $U\subset\R^n$, $n\geq2$, be an open convex set.
Fix any integer $k\geq1$.
If $f\in C^\infty(U)$ satisfies
$
\abs{f(x)}
\leq
C\abs{x_1}^k
$
for some $C>0$, then there is a smooth function $H$ so that
$
f(x)
=
x_1^kH(x)
$
.
\end{lemma}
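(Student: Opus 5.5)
The plan is to reduce to one variable by Taylor's formula with integral remainder, in the form used in Hadamard's lemma. Write $x=(x_1,x')$. Because $U$ is convex, the segment from $x$ to $(0,x')$ lies in $U$ whenever $(0,x')\in U$. The complication is that the projection $\{(0,x')\}$ need not meet $U$ for every $x\in U$. I would therefore define $H$ by two formulas: an integral formula near the hyperplane $\{x_1=0\}$, and the plain quotient $f/x_1^k$ away from it. The two agree on overlaps because both equal $f/x_1^k$ wherever $x_1\neq0$, so they glue to a single smooth function.

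\textbf{Step 1: derivatives vanish on the hyperplane.} The bound $\abs{f(x)}\le C\abs{x_1}^k$ gives $f=0$ on $U\cap\{x_1=0\}$. Now fix a point $(0,x')\in U$ and consider the one-variable function $g(s)=f(s,x')$ for small $s$. The same bound gives $g(s)=O(\abs{s}^k)$. Comparing with the Taylor expansion of $g$ at $0$ forces $\partial_{x_1}^j f(0,x')=0$ for $j<k$.

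\textbf{Step 2: the integral formula near the hyperplane.} On the open set $V=\{x\in U:(0,x')\in U\}$, which contains $U\cap\{x_1=0\}$, Taylor's formula with integral remainder in $x_1$ gives
\begin{equation}
f(x)=\frac{x_1^k}{(k-1)!}\int_0^1(1-t)^{k-1}\,\partial_{x_1}^k f(tx_1,x')\,\der t .
\end{equation}
This holds because, by Step 1, all lower-order Taylor terms at $(0,x')$ vanish. Convexity ensures that $(tx_1,x')\in U$ for every $t\in[0,1]$. Differentiating under the integral sign shows that the integral is $C^\infty$ on $V$, so it serves as $H$ there.

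\textbf{Step 3: gluing.} On $U\setminus\{x_1=0\}$ set $H=f/x_1^k$, which is smooth. The sets $V$ and $U\setminus\{x_1=0\}$ are open and together cover $U$. On their overlap the two definitions agree, so $H\in C^\infty(U)$ and $f=x_1^kH$.

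The only delicate point is Step 1, that is, extracting the vanishing of derivatives from the pointwise bound. It rests on the elementary fact that if a smooth function $g$ satisfies $g(s)=O(\abs{s}^k)$, then its Taylor coefficients of order less than $k$ vanish. I would prove this by induction: if $j$ is the first index with $g^{(j)}(0)\neq0$ and $j<k$, then $g(s)/s^j\to g^{(j)}(0)/j!\neq0$, contradicting $g(s)/s^j=O(\abs{s}^{k-j})\to0$.
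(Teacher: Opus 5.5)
Your proof is correct, but it is organized differently from the paper's. The paper proves only the case $k=1$ directly, writing $f(x)=x_1\int_0^1(\partial_1 f)(tx_1,x')\,\der t$ by the fundamental theorem of calculus. It then inducts on $k$: if $\abs{f}\lesssim\abs{x_1}^{K+1}$ and $f=x_1^KH$, then $\abs{H}\lesssim\abs{x_1}$, so the $k=1$ case applies to $H$. The advantage is that each stage only needs the current function to vanish on the hyperplane, which is read off immediately from the bound. Your Step~1, extracting $\partial_{x_1}^jf(0,x')=0$ for $j<k$ from the pointwise estimate, is therefore never needed there. You instead apply the order-$k$ Taylor formula with integral remainder once. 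This costs you Step~1 but gives an explicit closed formula for $H$ near the hyperplane and avoids the induction.

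More importantly, the paper applies its integral identity at every $x\in U$, which tacitly assumes $(0,x')\in U$. This fails for convex sets such as the strip $\{\abs{x_2-x_1}<1\}\subset\R^2$ at $x=(5,5)$. Your restriction to the open set $V$, together with the gluing with $f/x_1^k$ on $U\setminus\{x_1=0\}$, is exactly what handles these points. The same patch repairs the paper's $k=1$ step, after which its induction goes through. Your gluing also shows the lemma holds for any open $U$, if you use small balls around points of $U\cap\{x_1=0\}$ in place of $V$.
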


\subsection{Proof of the analytic theorem}

\begin{proof}[Proof of Theorem~\ref{thm:ordinary node->diagonalizable}]
Denote the sum and difference of the eigenvalues by $\mu_\pm=\lambda_1\pm\lambda_2$.
Because $\Gamma(x,\xi)$ is smooth, so is its trace $\mu_+(x,\xi)$.
The squared difference $\mu_-^2=\tr(\Gamma)^2-4\det(\Gamma)$ is smooth, too.
We need to establish that $\mu_-$ is smooth near any point $(x_0,\xi_0)\in\singularset$; elsewhere it is given by Lemma~\ref{lma:non-degenerate-smoothness}.

As we are on the degenerate set, Lemma~\ref{lma:smooth-singular-locus-on-bundle} gives $\xi_0\parallel\alpha(x_0)$.
We can multiply the one-form $\alpha$ so that $\xi_0=\alpha(x_0)$.

Fix any $\beta\in\R^2$ that is linearly independent from $\alpha(x_0)$.
This ensures that $\Phi(x,a,b)=(x,a\alpha(x)+b\beta)\in\bundleset$ is a smooth local diffeomorphism near $\Phi(x_0,1,0)=(x_0,\xi_0)$.
In these new coordinates $\singularset=\{b=0\}$.
We will prove that the function $\mu_-(x,a,b)$ expressed in these new coordinates is smooth near $(x_0,1,0)$.

As $\mu_-$ vanishes on $\singularset$ and is $L$-Lipcshitz for some $L>0$ by Lemma~\ref{lma:Lipschitz}, we get
$\abs{\mu_-(x,a,b)^2}\leq L^2b^2$.
Using Lemma~\ref{lma:Hadamard} on the smooth function $\mu_-^2$ gives a smooth function $H(x,a,b)$ for which $\mu_-(x,a,b)^2=b^2H(x,a,b)$.

Next we use this smooth $H$ and the good vertical properties of the eigenvalues to ensure that $\mu_-$ is smooth on the bundle; smoothness along each fiber is provided by Lemma~\ref{lma:global-existence+vertical-analyticity}.
Since $\mu_-=0$ along $b=0$ and the vertical differential is non-zero by Lemma~\ref{lma:ordinary node-fiber}, we must have $\partial_b\mu_-(x,a,0)\neq0$.
Therefore also
\begin{equation}
H(x,a,0)
=
[\partial_b\mu_-(x,a,0)]^2
>
0
.
\end{equation}
Upon shrinking $\bundleset$ if necessary, the function $H$ is positive and has a smooth positive square root $h=\sqrt{H}$.
We have thus found that
\begin{equation}
\label{eq:mu-minus-h}
\mu_-(x,a,b)
=
s(x,a,b)
b
h(x,a,b)
,
\end{equation}
where $s(x,a,b)=\pm1$ is a sign.

By smoothness of $\mu_-$ outside $\singularset$, the sign $s$ is locally constant in $\bundleset\setminus\singularset$.
The vertical slices of $s$ are smooth by Lemma~\ref{lma:global-existence+vertical-analyticity}, so $s$ is in fact a global constant and $\mu_-\in C^\infty(\bundleset)$.

We have now established that the eigenvalues $\frac12(\mu_+\pm\mu_-)$ are smooth in $\bundleset$ as claimed.
We then turn to eigenvectors $q_i$ of $\Gamma$, continuing to use the same coordinates $(x,a,b)$.

Let $P_i=q_iq_i^T$ be the orthogonal projector to the subspace $q_i\R$.
When $b\neq0$, the eigenvalue is simple and we can write $P_1$ as
\begin{equation}
P_1(x,a,b)
=
\frac{\Gamma_{c(x)}(\xi)-\lambda_2(x,\xi)I}{\lambda_1(x,a,b)-\lambda_2(x,a,b)}
.
\end{equation}
The matrix elements of the matrix in the numerator vanish at $b=0$, so by applying Lemma~\ref{lma:Hadamard} to each element we see that it can be written as $bM(x,a,b)$ for a smooth matrix-valued function $M$.
This together with equation~\eqref{eq:mu-minus-h} gives
\begin{equation}
P_1(x,a,b)
=
\frac{M(x,a,b)}{sh(x,a,b)}
.
\end{equation}
Both $M$ and $h$ are smooth and $h>0$, so $P_1$ is smooth on all of $\bundleset$.

Because $\tr(P_1)=1$ everywhere, $P_1(x,a,b)$ must, upon potentially shrinking the domain, have a positive diagonal entry.
Suppose without loss of generality that $P_1(x,a,b)_{11}>0$ at all points.

We can express the first component of the eigenvector $q_1(x,a,0)$ as
\begin{equation}
q_1(x,a,b)_1
=
\hat s(x,a,b)\sqrt{P_1(x,a,b)_{11}}
\end{equation}
with a variable sign $\hat s(x,a,b)=\pm1$.
By Lemma~\ref{lma:global-existence+vertical-analyticity} the vertical slices of eigenvectors are smooth, so $\hat s(x,a,b)$ is independent of $(a,b)$.
For $b\neq0$ we can use the smoothness of $q_1$ guaranteed by Lemma~\ref{lma:non-degenerate-smoothness} to see that $\hat s$ is also independent of $x$ and thus a global constant.
Therefore the first component of $q_1$ is smooth and non-vanishing.

The second component is simply
\begin{equation}
q_1(x,a,b)_2
=
\frac{P_1(x,a,b)_{12}}{q_1(x,a,b)_1}
,
\end{equation}
which is manifestly smooth.
The same argument works for the second eigenvector $q_2$ as well, so the eigenvectors are globally smooth, too. The functions $q_i$ are $0$-homogeneous in $\xi$ because $\Gamma$ is $2$-homogeneous.
\end{proof}

\subsection{Proofs of lemmas}

In the proofs in this subsection we may without notice shrink the domain to simplify the arguments.

\begin{proof}[Proof of Lemma~\ref{lma:smooth-singular-locus-on-bundle}]
As established in \cite{IKK2025}, a slowness surface is singular as a scheme at $(x_0,\xi_0)$ if and only the Christoffel matrix has a multiple eigenvalue at $(x_0,\xi_0)$. Hence $\mathcal{S}\subset\mathcal{S}':=\{(x,\xi):\mathrm{d}_\xi P(x,\xi)=0\}$, where $P$ denotes the slowness polynomial. Now on the other hand if $\mathrm{d}_\xi P(x,\xi)=0$, by homogeneity there is a constant $r$ such that $(x,r\xi)$ is on the slowness surface and hence the Christoffel matrix has a multiple eigenvalue at $(x,r\xi)$. Hence by homogeneity $\mathcal{S}'\subset\mathcal{S}$ and thus it suffices to characterize the set $\mathcal{S}'$. 

Since by assumption $(\mathrm{Hess}_\xi P)(x_0,\xi_0)$ is invertible, by the implicit function theorem we find a neighborhood $U_0$ of $x_0$ and a smooth function $\alpha:U_0\to\mathbb{R}^2$ such that
\begin{equation}
    (\mathrm{d}_\xi P)(x,\alpha(x))=0
\end{equation}
for all $x\in U_0$. Choose $\mathcal{B}$ such that $\pi_1(\mathcal{B})\subset U_0$. Then by homogeneity
\begin{equation}
    \mathcal{S}\cap\mathcal{B}=\mathcal{S}'\cap\mathcal{B}=\{(x,\mathbb{R}\alpha(x)):x\in\pi_1(\mathcal{B})\}.
\end{equation}
\end{proof}
\begin{remark}
    It is perhaps worth noting that we are not contradicting the uniqueness of $\alpha$ guaranteed by the implicit function theorem, since we find a single $\alpha$ for each function $r^3P$, $r\in\mathbb{R}$, not infinitely many functions $r\alpha$ for the single function $P$.
\end{remark}

\begin{proof}[Proof of Lemma~\ref{lma:global-existence+vertical-analyticity}]
For each $x$ and $\xi\in\mathbb{S}^1$ we can write
\begin{equation}
    \Gamma(x,\xi)=\Gamma(x,(\cos(\theta),\sin(\theta))=\Gamma(x,\theta).
\end{equation}
Since $\xi\mapsto\Gamma(x,\xi)$ is real analytic, we have that
\begin{equation}
    \theta\mapsto \Gamma(x,\theta)
\end{equation}
is real analytic on any open interval in $(0,2\pi]$. By choosing $\mathcal{B}$ small enough it suffices to consider $\theta\in I\subset (0,2\pi]$ for some open interval $I\subset (0,2\pi]$. The claim then follows by Rellich's theorem and homogeneity.
\end{proof}

\begin{proof}[Proof of Lemma~\ref{lma:non-degenerate-smoothness}]
Consider the function
\begin{equation}
    \Phi:(x,\xi,q,\lambda)\mapsto ((\Gamma-\lambda\mathrm{Id})q,(\vert q\vert^2-1)/2).
\end{equation}
Suppose $\Phi(x_0,\xi_0,q_0,\lambda_0)=0$ for some $(x,\xi)\in\mathcal{B}\setminus \mathcal{S}$. We have that
\begin{equation}
    (D_{(q,\lambda)}\Phi)(x_0,\xi_0,q_0,\lambda_0)=\begin{bmatrix}
        \Gamma(x_0,\xi_0)-\lambda_0\mathrm{Id} & q_0 \\
        -q^T_0 & 0
    \end{bmatrix}
\end{equation}
and
\begin{equation}
\begin{split}
\det((D_{(q,\lambda)}\Phi)(x_0,\xi_0,q_0,\lambda_0))
&=
\mathrm{tr}(\Gamma(x_0,\xi_0)-\lambda_0\mathrm{Id})\vert q_0\vert^2-\langle q_0,(\Gamma(x_0,\xi_0)-\lambda_0\mathrm{Id})q_0\rangle
\\
&=
\mathrm{tr}(\Gamma(x_0,\xi_0)-\lambda_0\mathrm{Id})\neq 0,
\end{split}
\end{equation}
since by the simplicity assumption the symmetric matrix $\Gamma(x_0,\xi_0)-\lambda_0\mathrm{Id}$ has exactly one nonzero eigenvalue. The claim follows by the implicit function theorem.
\end{proof}

\begin{proof}[Proof of Lemma~\ref{lma:Lipschitz}]
If $A(x)\in\R^{n\times n}$ is a symmetric matrix depending smoothly on $x\in\R^k$ and $\lambda(x)$ is a simple eigenvalue of $A(x)$, then the eigenvalue remains simple in a neighborhood of $x$ and
\begin{equation}
\frac{\partial}{\partial x_i}\lambda(x)
=
q_j(x)
q_k(x)
\frac{\partial}{\partial x_i}
A_{jk}(x)
,
\end{equation}
where $q$ is the corresponding unit eigenvector.
As $A(x)$ has uniformly bounded values and derivatives, these partials are uniformly bounded and thus the eigenvalue function is Lipschitz.
\end{proof}

\begin{proof}[Proof of Lemma~\ref{lma:ordinary node-fiber}]
By Lemma~\ref{lma:global-existence+vertical-analyticity} the two eigenvalues $\lambda_1(\xi),\lambda_2(\xi)$ of the Christoffel matrix $\Gamma(\xi)$ are real analytic functions of $\xi\neq0$.
We can write the slowness polynomial as $P_\bolda(\xi)=\det[\Gamma(\xi)-I]=[\lambda_1(\xi)-1][\lambda_2(\xi)-1]$.
At a singular point $\xi_0$ both eigenvalues equal one, and the Hessian of the product becomes simply
\begin{equation}
\label{eq:singular-Hessian}
D^2P_\bolda(\xi_0)
=
\der\lambda_1(\xi_0)\otimes\der\lambda_2(\xi_0)
+
\der\lambda_2(\xi_0)\otimes\der\lambda_1(\xi_0)
.
\end{equation}
By assumption $D^2P_\bolda(\xi_0)$ has rank $2$.
This can only happen in equation~\eqref{eq:singular-Hessian} if the differentials $\der\lambda_i(\xi_0)$ are linearly independent, and the claim follows.
\end{proof}

\begin{proof}[Proof of Lemma~\ref{lma:Hadamard}]
If $U$ does not intersect the set $\{x_1=0\}$, we may simply define $H(x)=x_1^{-k}f(x)$.
We may therefore assume that the sets do intersect.

Denoting $x=(x_1,x')$, we have by the fundamental theorem of calculus and the convexity of the set $U$ that
\begin{equation}
    f(x)=f(0,x')+\int_0^1\frac{\mathrm{d}}{\mathrm{d}t}f(tx_1,x')\ \mathrm{d}t=x_1\int_0^1(\partial_1f)(tx_1,x')\ \mathrm{d}t,
\end{equation}
since the function $f$ vanishes along $U\cap \{x_1=0\}$.
The claim follows for $k=1$ by the smoothness of the function $f$. 

Suppose then that the claim holds for any $1\leq k\leq K$. Then, if $f(x)\lesssim \vert x_1\vert^{K+1}$, we have
\begin{equation}
    \vert f(x)\vert =\vert x_1^KH(x)\vert=\vert x_1\vert^K\vert H(x)\vert\lesssim \vert x_1\vert^{K+1},
\end{equation}
giving us that
\begin{equation}
    \vert H(x)\vert\lesssim \vert x_1\vert.
\end{equation}
Applying the result in the case $k=1$ to the smooth function $H$ shows that
\begin{equation}
    H(x)=x_1\tilde{H}(x)
\end{equation}
for some smooth function $\tilde {H}$. The claim follows by induction.
\end{proof}

\appendix

\section{Relation to systems of involutive transverse type}
\label{app:involutive-fails}

Dencker defined systems of transverse type in~\cite{Dencker1995}. We do not use this terminology or its consequences, since in the case of ordinary nodes on slowness surfaces the elastic wave operator does not fit the definition; more precisely the involutivity condition given in~\cite{Dencker1995} holds if and only if the one form $\alpha$ given by Lemma~\ref{lma:smooth-singular-locus-on-bundle} is closed. To see when the involutivity condition is satisfied we use~\cite[Lemma A.26]{darwich2023propagation} which says that involutivity is equivalent to the pairwise vanishing of the Poisson brackets of all sets of defining functions with linearly independent differentials.

The singular locus of the slowness surface, denoted by $S$, is locally defined by $f_1=f_2=0$, where
$
    f_i(x,t,\xi,\tau):=\xi_i-\alpha_i(x)
$
for $i\in\{1,2\}$. Since
$
    (\mathrm{d}_\xi f_i)_j=\delta_{ij},
$
 the defining functions $f_i$ have linearly independent differentials. It is a straightforward computation that
\begin{equation*}
    \{f_1,f_2\}=\frac{\partial\alpha_2}{\partial x^1}-\frac{\partial\alpha_1}{\partial x^2}=0\iff \mathrm{d}\alpha=0.
\end{equation*}
Hence $S$
satisfies the involutivity condition if and only if the one form $\alpha$ is closed.

\section{Maxima code used for Lemma~\ref{lma:f''=0}}
\label{app:maxima}

Maxima code used in the proof of Lemma~\ref{lma:f''=0}:
\begin{verbatim}
/*general discriminant of 2x2 symmetric matrix*/
M:matrix([a,b],[b,c])$
I:matrix([1,0],[0,1])$
expand(determinant(M-L*I));
generaldiscriminant:expand((-c-a)^2-4*1*(a*c-b^2));

/*discriminant of Christoffel matrix*/
a:a11*p1^2 + 2*a13*p1*p2 + a33*p2^2$
b:a13*p1^2 + (a33 + a12)*p1*p2 + a23*p2^2$
c:a33*p1^2 + 2*a23*p1*p2 + a22*p2^2$
slownessdiscriminant:expand(ev(generaldiscriminant));

/*angular variations*/
f:ev(slownessdiscriminant,p1=cos(t),p2=sin(t));
d1f:diff(f,t)$
d2f:diff(f,t,2)$
f0:ev(f,t=0)$
d1f0:ev(d1f,t=0)$
d2f0:ev(d2f,t=0)$

conditions2:solve([f0,d1f0,d2f0],[a11,a12,a13,a22,a23,a33]);
length(conditions2);
imagpart(conditions2);
realpart(conditions2);
\end{verbatim}

\bibliographystyle{abbrv}
\bibliography{ref.bib}

@phdthesis{darwich2023propagation,
  author       = {Darwich, Rayhana}, 
  title        = {Propagation of polarization sets for systems of generalized transverse type and for systems of MHD type},
  school       = {University of G\"ottingen},
  year         = {2023},
  type = {Doctoral Dissertation}
}

@misc{IKK2025,
 author = {Ilmavirta, Joonas and Kirkkopelto, Pieti and Kykk{\"a}nen, Antti},
 title = {Horizontal and vertical regularity of elastic wave geometry},
 year = {2025},
 howpublished = {Preprint, {arXiv}:2511.16466 [math.{DG}] (2025)},
 url = {https://arxiv.org/abs/2511.16466},
 arXiv = {arXiv:2511.16466}
}

@misc{dHILV2023,
 author = {de Hoop, Maarten V. and Ilmavirta, Joonas and Lassas, Matti J. and V{\'a}rilly-Alvarado, Anthony},
 title = {Reconstruction of anisotropic stiffness tensors from partial data around one polarization},
 year = {2023},
 howpublished = {Preprint, {arXiv}:2307.03312 [math.{DG}] (2023)},
 url = {https://arxiv.org/abs/2307.03312},
 arXiv = {arXiv:2307.03312}
}

@article{Dencker1995,
author = {Dencker, Nils},
year = {1995},
month = {10},
pages = {249-279},
title = {The propagation of polarization for systems of transversal type},
volume = {33},
journal = {Ark. Mat.},
doi = {10.1007/BF02559709}
}

@article{OSSU2024,
 author = {Oksanen, Lauri and Salo, Mikko and Stefanov, Plamen and Uhlmann, Gunther},
 title = {Inverse problems for real principal type operators},
 fjournal = {American Journal of Mathematics},
 journal = {Am. J. Math.},
 issn = {0002-9327},
 volume = {146},
 number = {1},
 pages = {161--240},
 year = {2024},
 language = {English},
 doi = {10.1353/ajm.2024.a917541},
 zbMATH = {7791560},
 Zbl = {1531.35396}
}

@article{Dencker1982,
 author = {Dencker, Nils},
 title = {On the propagation of polarization sets for systems of real principal type},
 fjournal = {Journal of Functional Analysis},
 journal = {J. Funct. Anal.},
 issn = {0022-1236},
 volume = {46},
 pages = {351--372},
 year = {1982},
 language = {English},
 doi = {10.1016/0022-1236(82)90051-9},
 zbMATH = {3766740},
 Zbl = {0487.58028}
}

@article{BarceloEtAl2018,
  author  = {Barcel{\'o}, J. and Folch-Gabayet, M. and P{\'e}rez-Esteva, S. and Ruiz, A. and Vilela, M.},
  title   = {Uniqueness for inverse elastic medium problems},
  journal = {SIAM J. Math. Anal.},
  volume  = {50},
  pages   = {3939--3962},
  year    = {2018}
}

@article{BerettaFranciniVessella2014,
  author  = {Beretta, E. and Francini, E. and Vessella, S.},
  title   = {Uniqueness and {Lipschitz} stability for the identification of {Lam{\'e}} parameters from boundary measurements},
  journal = {Inverse Probl. Imaging},
  volume  = {8},
  pages   = {611--644},
  year    = {2014}
}

@article{BraamDuistermaat1993,
  author  = {Braam, P. J. and Duistermaat, J. J.},
  title   = {Normal forms of real symmetric systems with multiplicity},
  journal = {Indag. Math. (N.S.)},
  volume  = {4},
  number  = {4},
  pages   = {407--421},
  year    = {1993}
}

@article{ColinDeVerdiere2003,
 author = {Colin de Verdi{\`e}re, Yves},
 title = {The level crossing problem in semi-classical analysis. {I}: {The} symmetric class.},
 fjournal = {Annales de l'Institut Fourier},
 journal = {Ann. Inst. Fourier},
 issn = {0373-0956},
 volume = {53},
 number = {4},
 pages = {1023--1054},
 year = {2003},
 language = {English},
 doi = {10.5802/aif.1973},
 url = {https://eudml.org/doc/116061},
 zbMATH = {2014671},
 Zbl = {1113.35151}
}

@article{ColinDeVerdiere2004,
  author  = {Colin de Verdi{\`e}re, Y.},
  title   = {The level crossing problem in semi-classical analysis. {II}. The {Hermitian} case},
  journal = {Ann. Inst. Fourier},
  volume  = {54},
  number  = {5},
  pages   = {1423--1441},
  year    = {2004}
}

@article{DeHoopIlmavirtaLassas2025,
  author  = {de Hoop, M. V. and Ilmavirta, J. and Lassas, M.},
  title   = {Reconstruction along a geodesic from sphere data in {Finsler} geometry and anisotropic elasticity},
  journal = {J. Math. Pures Appl.},
  volume  = {196},
  pages   = {103688},
  year    = {2025}
}

@article{DeHoopIlmavirtaLassasSaksala2021,
  author  = {de Hoop, M. V. and Ilmavirta, J. and Lassas, M. and Saksala, T.},
  title   = {A foliated and reversible {Finsler} manifold is determined by its broken scattering relation},
  journal = {Pure Appl. Anal.},
  volume  = {3},
  number  = {4},
  pages   = {789--811},
  year    = {2021}
}

@article{DeHoopIlmavirtaLassasSaksala2023,
  author  = {de Hoop, M. V. and Ilmavirta, J. and Lassas, M. and Saksala, T.},
  title   = {Determination of a compact {Finsler} manifold from its boundary distance map and an inverse problem in elasticity},
  journal = {Comm. Anal. Geom.},
  volume  = {31},
  number  = {7},
  pages   = {1693--1747},
  year    = {2023}
}

@article{DeHoopUhlmannVasy2020,
  author  = {de Hoop, M. V. and Uhlmann, G. and Vasy, A.},
  title   = {Recovery of material parameters in transversely isotropic media},
  journal = {Arch. Rational Mech. Anal.},
  volume  = {235},
  number  = {1},
  pages   = {141--165},
  year    = {2020}
}

@article{Dencker1988,
  author  = {Dencker, N.},
  title   = {On the propagation of polarization in conical refraction},
  journal = {Duke Math. J.},
  volume  = {57},
  number  = {1},
  pages   = {85--134},
  year    = {1988}
}

@article{EskinRalston2002,
  author  = {Eskin, G. and Ralston, J.},
  title   = {On the inverse boundary value problem for linear isotropic elasticity},
  journal = {Inverse Problems},
  volume  = {18},
  number  = {3},
  pages   = {907--921},
  year    = {2002}
}

@article{HelbigCarcione2009,
  author  = {Helbig, K. and Carcione, J.},
  title   = {Anomalous polarization in anisotropic media},
  journal = {Eur. J. Mech. A Solids},
  volume  = {28},
  pages   = {704--711},
  year    = {2009}
}

@misc{IlmavirtaMonkkonen2022,
  author        = {Ilmavirta, J. and M{\"o}nkk{\"o}nen, K.},
  title         = {The geodesic ray transform on spherically symmetric reversible {Finsler} manifolds},
  year          = {2022},
  eprint        = {2203.16886},
  archivePrefix = {arXiv},
  primaryClass  = {math.DG}
}

@article{ImanuvilovUhlmannYamamoto2012,
  author  = {Imanuvilov, O. and Uhlmann, G. and Yamamoto, M.},
  title   = {On reconstruction of {Lam{\'e}} parameters from partial {Cauchy} data in three dimensions},
  journal = {Inverse Problems},
  volume  = {28},
  pages   = {125002},
  year    = {2012}
}

@article{MazzucatoRachele2006,
  author  = {Mazzucato, A. and Rachele, L.},
  title   = {Partial uniqueness and obstruction to uniqueness in inverse problems for anisotropic elastic media},
  journal = {J. Elasticity},
  volume  = {83},
  number  = {3},
  pages   = {205--245},
  year    = {2006}
}

@article{MazzucatoRachele2007,
  author  = {Mazzucato, A. and Rachele, L.},
  title   = {On uniqueness in the inverse problem for transversely isotropic elastic media with a disjoint wave mode},
  journal = {Wave Motion},
  volume  = {44},
  number  = {7},
  pages   = {605--625},
  year    = {2007}
}

@article{MazzucatoRachele2008,
  author  = {Mazzucato, A. and Rachele, L.},
  title   = {On transversely isotropic elastic media with ellipsoidal slowness surfaces},
  journal = {Math. Mech. Solids},
  volume  = {13},
  number  = {7},
  pages   = {611--638},
  year    = {2008}
}

@article{MelroseUhlmann1979,
  author  = {Melrose, R. and Uhlmann, G.},
  title   = {Lagrangian intersection and the {Cauchy} problem},
  journal = {Comm. Pure Appl. Math.},
  volume  = {32},
  number  = {4},
  pages   = {483--519},
  year    = {1979}
}

@article{NakamuraTanumaUhlmann1999,
  author  = {Nakamura, G. and Tanuma, K. and Uhlmann, G.},
  title   = {Layer stripping for a transversely isotropic elastic medium},
  journal = {SIAM J. Appl. Math.},
  volume  = {59},
  number  = {5},
  pages   = {1879--1891},
  year    = {1999}
}

@article{NakamuraUhlmann1994,
  author  = {Nakamura, G. and Uhlmann, G.},
  title   = {Global uniqueness for an inverse boundary problem arising in elasticity},
  journal = {Invent. Math.},
  volume  = {118},
  number  = {3},
  pages   = {457--474},
  year    = {1994}
}

@book{PaternainSaloUhlmann2023,
  author    = {Paternain, G. and Salo, M. and Uhlmann, G.},
  title     = {Geometric inverse problems: with emphasis on two dimensions},
  publisher = {Cambridge University Press},
  volume    = {204},
  year      = {2023}
}

@article{SacksYakhno1998,
  author  = {Sacks, P. E. and Yakhno, V. G.},
  title   = {The inverse problem for a layered anisotropic half space},
  journal = {J. Math. Anal. Appl.},
  volume  = {228},
  number  = {2},
  pages   = {377--398},
  year    = {1998}
}

@article{Uhlmann1982,
  author  = {Uhlmann, G.},
  title   = {Light intensity distribution in conical refraction},
  journal = {Comm. Pure Appl. Math.},
  volume  = {35},
  pages   = {69--80},
  year    = {1982}
}

@article{UhlmannVasy2016,
  author  = {Uhlmann, G. and Vasy, A.},
  title   = {The inverse problem for the local geodesic ray transform},
  journal = {Invent. Math.},
  volume  = {205},
  number  = {1},
  pages   = {83--120},
  year    = {2016}
}

@article{Yedlin1980,
  author  = {Yedlin, M. J.},
  title   = {The wave front in a homogeneous anisotropic medium},
  journal = {Bull. Seismol. Soc. Am.},
  volume  = {70},
  number  = {6},
  pages   = {2097--2102},
  year    = {1980}
}

@misc{zou2021,
 author = {Yuzhou Zou},
 title = {Microlocal methods for the elastic travel time tomography problem for transversely isotropic media},
 year = {2021},
 howpublished = {Preprint, {arXiv}:2112.14455 [math.{AP}]},
 url = {https://arxiv.org/abs/2112.14455},
 arXiv = {arXiv:2112.14455}
}

@book{Hintz2025,
 author = {Hintz, Peter},
 title = {An introduction to microlocal analysis},
 fseries = {Graduate Texts in Mathematics},
 series = {Grad. Texts Math.},
 issn = {0072-5285},
 volume = {304},
 isbn = {978-3-031-90705-0; 978-3-031-90708-1; 978-3-031-90706-7},
 year = {2025},
 publisher = {Cham: Springer},
 language = {English},
 doi = {10.1007/978-3-031-90706-7},
 zbMATH = {8014458}
}

@book{CLO2025,
 author = {Cox, David A. and Little, John and O'Shea, Donal},
 title = {Ideals, varieties, and algorithms. {An} introduction to computational algebraic geometry and commutative algebra},
 edition = {5th edition},
 fseries = {Undergraduate Texts in Mathematics},
 series = {Undergraduate Texts Math.},
 issn = {0172-6056},
 isbn = {978-3-031-91840-7; 978-3-031-91843-8; 978-3-031-91841-4},
 year = {2025},
 publisher = {Cham: Springer},
 language = {English},
 doi = {10.1007/978-3-031-91841-4},
 zbMATH = {8074064}
}

@book{SK2000,
 author = {Smith, Karen E. and Kahanp{\"a}{\"a}, Lauri and Kek{\"a}l{\"a}inen, Pekka and Traves, William},
 title = {An invitation to algebraic geometry},
 fseries = {Universitext},
 series = {Universitext},
 issn = {0172-5939},
 isbn = {0-387-98980-3},
 year = {2000},
 publisher = {New York, NY: Springer},
 language = {English},
 zbMATH = {1515219},
 Zbl = {0962.14001}
}

@book{CA2000,
 author = {Casas-Alvero, Eduardo},
 title = {Singularities of plane curves},
 fseries = {London Mathematical Society Lecture Note Series},
 series = {Lond. Math. Soc. Lect. Note Ser.},
 issn = {0076-0552},
 volume = {276},
 isbn = {0-521-78959-1},
 year = {2000},
 publisher = {Cambridge: Cambridge University Press},
 language = {English},
 zbMATH = {1497487},
 Zbl = {0967.14018}
}

\end{document}